\theoremstyle{plain}
\newtheorem{theorem}{Theorem}[section]
\newtheorem{corollary}[theorem]{Corollary}
\newtheorem{lemma}[theorem]{Lemma}
\newtheorem{proposition}[theorem]{Proposition}
\newtheorem{conjecture}[theorem]{Conjecture}
\theoremstyle{definition}
\newtheorem{definition}[theorem]{Definition}
\newtheorem{remark}[theorem]{Remark}
\newtheorem{example}[theorem]{Example}
\newcommand{\eps}{\varepsilon}
\renewcommand{\phi}{\varphi}
\newcommand{\R}{\mathbb{R}}
\newcommand{\Z}{\mathbb{Z}}
\newcommand{\sq}{\mathbb{S}^{q-1}}
\newcommand{\hinf}{h_\infty}
\newcommand{\oo}{\mathcal{O}}
\newcommand{\vol}{\operatorname{Vol}}
\newcommand{\bcd}{\operatorname{BCD}}
\newcommand{\ahat}{\widehat{A}}
\newcommand{\bor}{controlled}
\newcommand{\cem}{coarse embedding}
\newcommand{\ceq}{coarse equivalence}
\newcommand{\clo}{close}
\newcommand{\cin}{coarse inverse}
\newcommand{\ee}{\hfill$\Diamond$\end{example}}
\begin{document}

\title{Coarse entropy}

\author{William Geller}
\address{Department of Mathematical Sciences, IUPUI, 402 N. Blackford
  Street, Indianapolis, IN 46202}
\email{wgeller@math.iupui.edu}
\author{Micha{\l} Misiurewicz}
\address{Department of Mathematical Sciences, IUPUI, 402 N. Blackford
  Street, Indianapolis, IN 46202}
\email{mmisiure@math.iupui.edu}

\thanks{This work was partially supported by grant number 426602
  from the Simons Foundation to Micha{\l} Misiurewicz.}

\date{April 10, 2020}

\keywords{Coarse entropy, topological entropy, coarse geometry}

\subjclass[2020]{37B40, 51F30}

\begin{abstract}
Coarse geometry studies metric spaces on the large scale. 
Our goal here is to study dynamics from a coarse point of view. 
To this end we introduce a coarse version of topological entropy,
suitable for unbounded metric spaces, consistent with the coarse 
perspective on such spaces. As is the case with the usual topological 
entropy, the coarse entropy  measures the divergence of orbits. Following Bowen's ideas, 
we use $(n,\varepsilon)$-separated or $(n,\varepsilon)$-spanning sets.
However, we have to let $\varepsilon$ go to infinity rather than to
zero.
\end{abstract}

\maketitle

\section{Introduction}

For a continuous self map of a compact metric space, viewed as a 
discrete time dynamical system via iteration, the topological
entropy of Adler, Konheim, and McAndrew~\cite{AKM} can be seen
as a measure of the divergence of orbits. Rufus Bowen~\cite{B}  
extended this definition to the noncompact case, as did later
authors. 
While topological entropy is in some sense a global invariant,
a map may have large or even infinite topological entropy even 
if it acts as the identity on all but a small portion of the space.

Coarse (or large scale, or asymptotic) geometry, as developed by
Gromov~\cite{G} and many others in recent decades, considers
properties of metric spaces which, roughly speaking, are visible
to an observer at a vantage point receding to infinity. Since to
a coarse geometer all bounded metric spaces are equivalent to 
a point, the focus is on unbounded spaces, for example the Cayley
graph of a finitely generated infinite group. This example led
to the success of coarse geometry in geometric group theory,
where coarse properties of the Cayley graph (for instance the 
number of ends) give information about the group in question.
For more on coarse geometry, see e.g. Roe~\cite{R}.

Our goal here is to study dynamics from a coarse point of view.
To this end we introduce a coarse version of topological
entropy, suitable for unbounded metric spaces, consistent with
the coarse perspective on such spaces. This entropy should 
be invariant under a notion of coarse conjugacy of dynamical
systems, and so in particular should be insensitive to the
behavior of the map on a bounded invariant subset. This is
in stark contrast with the usual noncompact entropy.
The theory we develop will apply most usefully to \emph{controlled}
maps (see Section 2) of finite dimensional spaces, as we will see.

In Section 2 we introduce the notion of coarse conjugacy of maps
on metric spaces. In Section 3 we introduce the coarse entropy $\hinf$
of a map, show that it is an invariant of coarse conjugacy, and study
its behavior. In Section 4 we compute the coarse entropy of linear
maps on $\R^q$. We also compute $\hinf$ for certain homotheties and
relate it in this case to the box-counting dimension.
In Section 5 we provide examples showing what
can go wrong in infinite dimensional spaces. 

\section{Coarse conjugacy}

If we want to investigate coarse dynamics, we need to define coarse
conjugacy; this will play the same role as conjugacy in ordinary
dynamics. It turns out that this is not completely trivial.

Let us start by fixing terminology and notation. This is important,
since various authors use various terminology.

We will consider metric spaces, usually denoted $X,Y,Z$, with metric
that we will denote $d$ (in all spaces). Then we will consider a map
from the space to itself, and its iterations. To get the most general
results, we do not assume anything about the map. However, if we
restrict our attention to the class of \bor\ maps (see the definition
below), we get some additional properties. Note that not all
\bor\ maps are continuous and not all continuous maps are \bor.

We will call a map $\phi:X\to Y$ \emph{\bor}\footnote{Such maps are
also called \emph{bornologous}.}
if there is an increasing function $L:[0,\infty)\to[0,\infty)$
such that for every $x,x'\in X$
\[
d(\phi(x),\phi(x'))\le L(d(x,x')).
\]
If additionally for every $x,x'\in X$
\[
d(x,x')\le L(d(\phi(x),\phi(x'))),
\]
then $\phi$ is called a \emph{\cem}. If in addition to those two
inequalities there exists a constant $M>0$ such that for every $y\in
Y$ there exists $x\in X$ such that $d(y,\phi(x))\le M$, then $\phi$ is
called a \emph{\ceq}.

Clearly in the above definition we can replace $L$ by any increasing
function larger than or equal to $L$. Observe that for any increasing
function $L:[0,\infty)\to[0,\infty)$ there is a strictly increasing
continuous function $\hat{L}:[0,\infty)\to[0,\infty)$ such that
$L\le\hat{L}$ and $\lim_{t\to\infty}\hat{L}(t)=\infty$ (we leave the
proof of this simple fact to the reader as an entertainment).
Therefore in the future we will always assume that $L$ is strictly
inreasing, continuous and $\lim_{t\to\infty}L(t)=\infty$.

If for two maps $\phi,\phi':X\to Y$ there exists a constant $K>0$ such
that for every $x\in X$ we have $d(\phi(x),\phi'(x))\le K$, then we
will say that $\phi$ and $\psi$ are \emph{\clo}. Clearly, closeness is
an equivalence relation. A map $\psi:Y\to X$ is called a
\emph{\cin}\ of $\phi:X\to Y$ if $\psi\circ\phi$ is \clo\ to the
identity on $X$ and $\phi\circ\psi$ is \clo\ to the identity on $Y$.
The following facts are well-known.

\begin{lemma}\label{co0}
\begin{enumerate}[{\rm(a)}]
\item The composition of \bor\ maps (respectively, \cem s, \ceq s) is
  a \bor\ map (respectively, a \cem, a \ceq).
\item Every \ceq\ has a \cin, and this \cin\ is also a \ceq.
\item If a map is \clo\ to a \bor\ map (respectively, a \cem, a \ceq),
  then it is also a \bor\ map (respectively, a \cem, a \ceq).
\item If maps $\zeta,\zeta'$ are \clo\ and a map $\xi$ is such that
  the compositions $\zeta\circ\xi,\zeta'\circ\xi$ make sense, then
  those compositions are also \clo.
\item If maps $\zeta,\zeta'$ are \clo\ and a \bor\ map $\xi$ is
  such that the compositions $\xi\circ\zeta,\xi\circ\zeta'$ make
  sense, then those compositions are also \clo.
\end{enumerate}
\end{lemma}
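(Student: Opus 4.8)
The plan is to verify each of the five parts directly from the definitions, since these are all "folklore" facts whose proofs amount to composing the governing inequalities and tracking constants. Throughout I will use the convention fixed in the excerpt that the control function $L$ may be taken strictly increasing, continuous, and unbounded; this matters because parts (a) and (b) require inverting such functions.

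For part (a), given \bor\ maps $\phi:X\to Y$ and $\psi:Y\to Z$ with control functions $L_\phi,L_\psi$, I would show $\psi\circ\phi$ is \bor\ with control function $L_\psi\circ L_\phi$, using monotonicity of $L_\psi$ to push the inequality $d(\phi(x),\phi(x'))\le L_\phi(d(x,x'))$ through. For \cem s, the lower bound $d(x,x')\le L_\phi\bigl(d(\phi(x),\phi(x'))\bigr)$ chains in the same way after composing with $L_\psi$; here the normalization ensuring $L$ is invertible lets me state the composite control function cleanly. The \ceq\ case adds the coarse surjectivity constant $M$, which composes by a routine triangle-inequality estimate using that $\psi$ is \bor.

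For part (b), given a \ceq\ $\phi:X\to Y$, the standard construction is to define $\psi(y)=x$ by choosing, for each $y$, some $x$ with $d(y,\phi(x))\le M$ (coarse surjectivity guarantees one exists). I would then check $\psi\circ\phi$ and $\phi\circ\psi$ are \clo\ to the respective identities, and that $\psi$ itself is a \ceq, using the lower inequality for $\phi$ to control $d(\psi(y),\psi(y'))$ from both sides. Parts (c)--(e) are the closeness-bookkeeping lemmas: for (c) I absorb the closeness constant $K$ into the control function via the triangle inequality; for (d) closeness of $\zeta,\zeta'$ passes through pre-composition with any $\xi$ trivially since the argument changes but the output-distance bound $d(\zeta(\xi(w)),\zeta'(\xi(w)))\le K$ holds pointwise; for (e) post-composing with a \bor\ map $\xi$ replaces the bound $K$ by $L_\xi(K)$.

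The main obstacle is part (b), specifically the use of the Axiom of Choice to select the coarse inverse and then verifying that this choice-defined $\psi$ is genuinely \bor\ and satisfies the lower bound making it a \cem. The delicate estimate is bounding $d(\psi(y),\psi(y'))$: one writes $d(\phi(\psi(y)),\phi(\psi(y')))\le d(\phi(\psi(y)),y)+d(y,y')+d(y',\phi(\psi(y')))\le 2M+d(y,y')$ and then applies the lower inequality for $\phi$, namely $d(\psi(y),\psi(y'))\le L_\phi\bigl(d(\phi(\psi(y)),\phi(\psi(y')))\bigr)$, to obtain a control function for $\psi$; a symmetric argument using $\phi$ being \bor\ gives the other direction. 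I expect this constant-chasing to be the only place requiring genuine care, with everything else being mechanical.
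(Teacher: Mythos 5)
Your proposal is correct: each part follows by exactly the constant-chasing you describe, and the construction of the coarse inverse in (b), together with the estimate $d(\phi(\psi(y)),\phi(\psi(y')))\le 2M+d(y,y')$ followed by the lower control inequality for $\phi$, is the standard argument. The paper itself gives no proof of this lemma, labelling the facts as well-known, so there is nothing to compare against beyond noting that your argument is the usual one; the only cosmetic remark is that invertibility of $L$ is never actually needed here (monotonicity suffices for all the chaining), so your appeals to the normalization of $L$ are harmless but superfluous.
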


\begin{remark}\label{co1}
In view of (c), when applying (a), each time before we apply the next
map in the composition, we can modify our map by a bounded amount.
\end{remark}

In the rest of this section we will be using the map $L$ and the
constant $K$ in the above sense. 
We will exploit the fact that $L$ can be replaced by a larger function
and $K$ by a larger constant to use the same $L$ and $K$ for several
maps under consideration.

The simplest idea for defining a coarse conjugacy between $f:X\to X$
and $g:Y\to Y$ would be to require that there exists a \ceq\
$\phi:X\to Y$ such that $\phi\circ f$ is \clo\ to $g\circ\phi$.
However, in Example~\ref{co6} we show that with this definition coarse
conjugacy would not be a symmetric relation. Therefore we need a
better definition.

\begin{definition}\label{co2}
Maps $f:X\to X$ and $g:Y\to Y$ are \emph{coarsely conjugate} if there
exists a \ceq\ $\phi:X\to Y$ with a \cin\ $\psi:Y\to X$ such that
$\phi\circ f$ is \clo\ to $g\circ\phi$ and $\psi\circ g$ is \clo\ to
$f\circ\psi$.
\end{definition}

\begin{conjecture}\label{co3}
If there exist \ceq s $\phi:X\to Y$ and $\psi:Y\to X$ such that
$\phi\circ f$ is \clo\ to $g\circ\phi$ and $\psi\circ g$ is \clo\ to
$f\circ\psi$ then $f$ is coarsely conjugate to $g$.
\end{conjecture}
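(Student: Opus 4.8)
The plan is to keep the given \ceq\ $\phi:X\to Y$ and to replace the given $\psi$ by an honest \cin\ of $\phi$ that still respects the dynamics. By Lemma~\ref{co0}(b), $\phi$ has a \cin\ $\psi_0:Y\to X$, which is itself a \ceq\ and hence \bor. If I can show that $\psi_0\circ g$ is \clo\ to $f\circ\psi_0$, then the pair $(\phi,\psi_0)$ satisfies Definition~\ref{co2}: the condition that $\phi\circ f$ be \clo\ to $g\circ\phi$ is already one of our hypotheses, so $f$ and $g$ would be coarsely conjugate. Thus the whole problem reduces to manufacturing, out of the two one-directional hypotheses, a single \cin\ of $\phi$ that intertwines in both directions.

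First I would extract from the hypothesis $\phi\circ f$ \clo\ $g\circ\phi$ a one-sided intertwining for $\psi_0$ that requires nothing about $f$ and $g$. Composing on the left by the \bor\ map $\psi_0$ (Lemma~\ref{co0}(e)) gives that $\psi_0\circ\phi\circ f$ is \clo\ to $\psi_0\circ g\circ\phi$; since $\psi_0\circ\phi$ is \clo\ to $\mathrm{id}_X$, composing that closeness on the right by $f$ (Lemma~\ref{co0}(d)) shows $\psi_0\circ\phi\circ f$ is \clo\ to $f$, so by transitivity
\[
  f \text{ is \clo\ to } \psi_0\circ g\circ\phi .
\]
The remaining task is to cancel the trailing $\phi$. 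Composing this last relation on the right by $\psi_0$ (Lemma~\ref{co0}(d)) yields that $f\circ\psi_0$ is \clo\ to $\psi_0\circ g\circ\phi\circ\psi_0$, and since $\phi\circ\psi_0$ is \clo\ to $\mathrm{id}_Y$ one would like to conclude that $\psi_0\circ g\circ\phi\circ\psi_0$ is \clo\ to $\psi_0\circ g$, hence that $\psi_0\circ g$ is \clo\ to $f\circ\psi_0$, as wanted.

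Here is exactly where the difficulty lies, and I expect it to be the main obstacle. That final step is the left-composition of the closeness of $\phi\circ\psi_0$ and $\mathrm{id}_Y$ by the map $\psi_0\circ g$, which by Lemma~\ref{co0}(e) is legitimate provided $\psi_0\circ g$ is \bor, i.e.\ essentially provided $g$ is \bor. For \bor\ $g$ (equivalently \bor\ $f$) this completes the argument and the conjecture becomes a theorem, and the hypothesis involving $\psi$ is not even needed. In the general case, however, $g$ may send nearby points far apart, so precomposing $g$ with the near-identity $\phi\circ\psi_0$ can distort it uncontrollably, and the cancellation genuinely fails. This is precisely the point at which the second hypothesis must enter: one should use $\psi$ to repair $\psi_0$ into a \cin\ of $\phi$ that still intertwines. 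A natural route is to study the \ceq\ $\psi\circ\phi:X\to X$, which, using both hypotheses together with Lemma~\ref{co0}(d),(e) and \emph{no} controlledness assumption, commutes with $f$ up to closeness (from $\phi\circ f$ \clo\ $g\circ\phi$ one gets $\psi\circ\phi\circ f$ \clo\ $\psi\circ g\circ\phi$, and from $\psi\circ g$ \clo\ $f\circ\psi$ one gets $\psi\circ g\circ\phi$ \clo\ $f\circ\psi\circ\phi$). Converting this ``commutes with $f$'' information into the missing reverse intertwining for a coarse inverse of $\phi$ is the crux of the conjecture, and the reason it resists the purely formal manipulation that settles the \bor\ case.
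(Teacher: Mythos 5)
The statement you are trying to prove is stated in the paper as Conjecture~\ref{co3}; the authors give no proof, and as far as the paper is concerned it is open. Your proposal does not close it either, and to your credit you say so explicitly: the argument you carry out is, in substance, the paper's own Proposition~\ref{co7}. Replacing $\psi$ by an arbitrary \cin\ $\psi_0$ of $\phi$ and deriving $f\ \text{\clo}\ \psi_0\circ g\circ\phi$ from the single hypothesis on $\phi$ is exactly the computation in that proof, and the step you flag as the obstruction --- cancelling the trailing $\phi$ by right-composing with $\psi_0$ and then left-composing the closeness of $\phi\circ\psi_0$ to $\mathrm{id}_Y$ by $\psi_0\circ g$ --- is precisely where Lemma~\ref{co0}(e) demands that $\psi_0\circ g$ be \bor. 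So your reduction settles only the \bor\ case, which the paper already covers, and the general case remains untouched. Your closing suggestion (extract from both hypotheses that $\psi\circ\phi$ commutes with $f$ up to closeness and try to upgrade this to the missing intertwining for a \cin\ of $\phi$) is a reasonable direction, but it is a plan, not an argument; you do not produce the repaired \cin, and Example~\ref{co4} warns that not every \cin\ of $\phi$ will work, so some genuine construction is needed there.

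One factual correction: your parenthetical ``for \bor\ $g$ (equivalently \bor\ $f$)'' is wrong. Proposition~\ref{co7} shows that \bor ness of $g$ forces \bor ness of $f$, but Example~\ref{co8} exhibits $f$ the identity (certainly \bor) with $g$ not \bor\ and the conclusion failing, so the hypothesis on $g$ cannot be traded for the same hypothesis on $f$. In summary: your diagnosis of where the formal manipulation breaks is accurate and matches the paper's own analysis, but the proposal is not a proof of the conjecture, and the paper offers none to compare it against.
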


\begin{example}\label{co4}
In Definition~\ref{co2} we cannot, in general, choose an arbitrary
\cin\ $\psi$ of $\phi$. For instance the maps $f,g:\R\to\R$,
given by $f(x)=x^2$ and $g(x)=x^2+2x$ are coarsely conjugate via
$\phi(x)=x-1$ and its \cin\ (in fact, inverse) $\psi(x)=x+1$,
but not via $\phi(x)=x-1$ and its \cin\ $\psi(x)=x$.
\ee

\begin{lemma}\label{co5}
Coarse conjugacy is an equivalence relation.
\end{lemma}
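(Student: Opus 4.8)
The plan is to verify the three defining properties of an equivalence relation—reflexivity, symmetry, and transitivity—for coarse conjugacy as given in Definition~\ref{co2}. Reflexivity is immediate: any map $f:X\to X$ is coarsely conjugate to itself via the identity $\phi=\mathrm{id}_X$, which is a \ceq\ with \cin\ $\psi=\mathrm{id}_X$, and both closeness conditions hold trivially since $\phi\circ f = f\circ\phi = f$. Symmetry is nearly as direct, and this is precisely the point that motivated the asymmetric-looking Definition~\ref{co2}: if $f$ is coarsely conjugate to $g$ via $(\phi,\psi)$, then $g$ is coarsely conjugate to $f$ via $(\psi,\phi)$. Here I would note that $\psi$ is itself a \ceq\ by Lemma~\ref{co0}(b), that $\phi$ serves as a \cin\ of $\psi$ (the closeness relations $\psi\circ\phi\sim\mathrm{id}_X$ and $\phi\circ\psi\sim\mathrm{id}_Y$ are symmetric in $\phi,\psi$), and that the two closeness conditions simply swap roles.

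The real work is transitivity, so I would spend the bulk of the proof there. Suppose $f:X\to X$ is coarsely conjugate to $g:Y\to Y$ via a \ceq\ $\phi:X\to Y$ with \cin\ $\psi$, and $g$ is coarsely conjugate to $h:Z\to Z$ via a \ceq\ $\alpha:Y\to Z$ with \cin\ $\beta$. The natural candidate conjugacy is the composition $\alpha\circ\phi:X\to Z$ with candidate \cin\ $\psi\circ\beta$. By Lemma~\ref{co0}(a) the composition $\alpha\circ\phi$ is a \ceq, and I must check that $\psi\circ\beta$ is genuinely a \cin\ of $\alpha\circ\phi$. For this I would compute $(\psi\circ\beta)\circ(\alpha\circ\phi)=\psi\circ(\beta\circ\alpha)\circ\phi$, use that $\beta\circ\alpha$ is \clo\ to $\mathrm{id}_Y$, and then invoke Lemma~\ref{co0}(d) and~(e) to push this closeness through the outer maps (using that $\psi$ is \bor) to conclude $\psi\circ\beta\circ\alpha\circ\phi$ is \clo\ to $\psi\circ\phi$, which in turn is \clo\ to $\mathrm{id}_X$; the analogous computation handles the other side in $Z$.

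The main obstacle—and the step requiring the most care—is verifying the two closeness conditions for the composed conjugacy, namely that $(\alpha\circ\phi)\circ f$ is \clo\ to $h\circ(\alpha\circ\phi)$ and symmetrically for the \cin. Writing $(\alpha\circ\phi)\circ f=\alpha\circ(\phi\circ f)$, I would use that $\phi\circ f$ is \clo\ to $g\circ\phi$ and apply Lemma~\ref{co0}(e) with the \bor\ map $\alpha$ to get $\alpha\circ\phi\circ f$ \clo\ to $\alpha\circ g\circ\phi$; then since $\alpha\circ g$ is \clo\ to $h\circ\alpha$, Lemma~\ref{co0}(d) gives $\alpha\circ g\circ\phi$ \clo\ to $h\circ\alpha\circ\phi$, and transitivity of closeness closes the chain. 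The delicate points throughout are keeping track of which maps must be \bor\ to legitimately invoke parts~(d) versus~(e) of Lemma~\ref{co0}, and repeatedly using transitivity of the closeness relation to stitch the intermediate comparisons together. Once these chains are assembled, Remark~\ref{co1} reassures us that modifying maps by bounded amounts at each composition step is harmless, so the argument goes through cleanly.
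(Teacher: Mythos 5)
Your proposal is correct and follows essentially the same route as the paper: transitivity via the composed coarse equivalence $\alpha\circ\phi$ with coarse inverse $\psi\circ\beta$, using Lemma~\ref{co0}(a) for the composition, parts (d) and (e) to propagate closeness through the chain, and the boundedness of the coarse inverse relations to verify $\psi\circ\beta$ is a coarse inverse. The only difference is notational ($\alpha,\beta$ for the paper's $\phi',\psi'$) and that you spell out reflexivity and symmetry, which the paper dismisses as clear.
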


\begin{proof}
Clearly, coarse conjugacy is reflexive and symmetric. We will show
that it is also transitive.

Let $f:X\to X$, $g:Y\to Y$ and $h:Z\to Z$ be three maps; let $f$ be
coarsely conjugate to $g$ via $\phi$ and $\psi$, and let $g$ be
coarsely conjugate to $h$ via $\phi'$ and $\psi'$. We want to show
that $f$ is coarsely conjugate to $h$ by $\phi'\circ\phi$ and
$\psi\circ\psi'$.

By Lemma~\ref{co0}, $\phi'\circ\phi$ and $\psi\circ\psi'$ are \ceq s.
By Lemma~\ref{co0} and Remark~\ref{co1}, the maps
$\psi\circ\psi'\circ\phi'\circ\phi$ and
$\phi'\circ\phi\circ\psi\circ\psi'$ are \clo\ to the corresponding
identities, so $\psi\circ\psi'$ is a \cin\ of $\phi'\circ\phi$.

By the assumption and Lemma~\ref{co0}~(d), $h\circ\phi'\circ\phi$ is
\clo\ to $\phi'\circ h\circ\phi$. By the assumption and
Lemma~\ref{co0}~(e), $\phi'\circ h\circ\phi$ is \clo\ to
$\phi'\circ\phi\circ f$. Therefore $h\circ\phi'\circ\phi$ is \clo\ to
$\phi'\circ\phi\circ f$. Similarly, $f\circ\psi\circ\psi'$ is \clo\ to
$\psi\circ\psi'\circ h$.
\end{proof}

Now let us return to the question whether we really need $\psi$ in the
definition of coarse conjugacy.

\begin{example}\label{co6}
Let $X=\Z$ and $Y=\R$ and let $f:X\to X$ and $g:Y\to Y$ be defined by
the same formula $x\mapsto x^2$. If $\phi:X\to Y$ is the natural
embedding, $\phi(x)=x$, then clearly $\phi$ is a \ceq\ and $\phi\circ
f=g\circ\phi$. However, $f$ is not coarsely conjugate to $g$, because
there is no \ceq\ $\psi:Y\to X$ for which $\psi\circ g$ is \clo\ to
$f\circ\psi$.

Indeed, suppose that such $\psi$ exists. Then for every $x>0$ the set
$\psi([x,x+1])$ is contained in an interval of length $L(1)$, so it has
at most $L(1)+1$ elements. Therefore the set $f(\psi([x,x+1]))$ has
also at most $L(1)+1$ elements, so the set $\psi(g([x,x+1]))$ has at
most $(2K+1)(L(1)+1)$ elements. However, the interval $g([x,x+1])$ has
length $2x+1$, so there must be a point $n\in\Z$ whose preimage under
$\psi$ has diameter at least $\frac{2x+1}{(2K+1)(L(1)+1)}$. This means
that there are $y,z\in\R$ with $\psi(y)=\psi(z)$ and
$d(y,z)\ge\frac{2x+1}{(2K+1)(L(1)+1)}$. Therefore,
\[
\frac{2x+1}{(2K+1)(L(1)+1)}\le L(0),
\]
which is clearly not true if $x$ is sufficiently large.
\ee

However, if we focus on controlled maps, we can dispense with $\psi$
in the definition of coarse conjugacy.  In fact, we have
\begin{proposition}\label{co7}
Consider maps $f:X\to X$ and $g:Y\to Y$ for which there exists a
\ceq\ $\phi:X\to Y$ such that $\phi\circ f$ is \clo\ to $g\circ\phi$
and $g$ is \bor. Then $f$ is also \bor\ and for any \cin\ $\psi$ of
$\phi$ the maps $f$ and $g$ are coarsely conjugate via $\phi$ and
$\psi$.
\end{proposition}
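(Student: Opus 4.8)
The plan is to treat the two assertions separately, and in both to lean on the fact that, being a \ceq, $\phi$ is in particular a \cem, so the inequality $d(x,x')\le L(d(\phi(x),\phi(x')))$ holds for all $x,x'\in X$. Following the convention of this section I will use a single $L$ witnessing simultaneously that $\phi$ and $g$ are \bor\ and the \cem\ inequality for $\phi$, and a single $K$ for the three relevant closeness relations, $d(\phi(f(x)),g(\phi(x)))\le K$, $d(\psi(\phi(x)),x)\le K$ and $d(\phi(\psi(y)),y)\le K$.

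To see that $f$ is \bor, I would start from the \cem\ inequality for $\phi$, which reduces estimating $d(f(x),f(x'))$ to estimating $d(\phi(f(x)),\phi(f(x')))$ in $Y$. Replacing each occurrence of $\phi\circ f$ by $g\circ\phi$ at a cost of $2K$, then using that $g$ is \bor\ and that $\phi$ is \bor, gives
\[
d(f(x),f(x'))\le L\bigl(2K+L(L(d(x,x')))\bigr),
\]
whose right-hand side is an increasing function of $d(x,x')$; hence $f$ is \bor. This step never mentions $\psi$.

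For the second assertion, since $\phi$ is a \ceq\ with \cin\ $\psi$ and $\phi\circ f$ is \clo\ to $g\circ\phi$ by hypothesis, Definition~\ref{co2} leaves only one thing to check: that $\psi\circ g$ is \clo\ to $f\circ\psi$. This is where the main obstacle lies, because an arbitrary \cin\ $\psi$ is not assumed to be \bor, so I cannot postcompose the known closeness relations by $\psi$ using Lemma~\ref{co0}(e). I would circumvent this exactly as before, by applying the \cem\ inequality for $\phi$ to move the estimate into $Y$: for every $y\in Y$,
\[
d(\psi(g(y)),f(\psi(y)))\le L\bigl(d(\phi(\psi(g(y))),\phi(f(\psi(y))))\bigr).
\]
In $Y$ everything is controlled: $\phi(\psi(g(y)))$ lies within $K$ of $g(y)$ since $\phi\circ\psi$ is \clo\ to the identity, while $\phi(f(\psi(y)))$ lies within $K$ of $g(\phi(\psi(y)))$ (closeness of $\phi\circ f$ and $g\circ\phi$), which in turn lies within $L(K)$ of $g(y)$ (since $\phi\circ\psi$ is \clo\ to the identity and $g$ is \bor). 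The triangle inequality then bounds the inner distance by $2K+L(K)$, so $d(\psi(g(y)),f(\psi(y)))\le L(2K+L(K))$ uniformly in $y$, which is precisely the closeness of $\psi\circ g$ and $f\circ\psi$.

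I expect the controllability of $\psi$ to be the one genuinely delicate point, and the device above---pulling estimates in $X$ back from estimates of $\phi$-images in $Y$---is what resolves it. An alternative would be to first prove that every \cin\ $\psi$ of $\phi$ is itself a \ceq: by Lemma~\ref{co0}(b), $\phi$ admits a \cin\ $\psi_0$ that is a \ceq, a short closeness computation shows $\psi$ is \clo\ to $\psi_0$, and then Lemma~\ref{co0}(c) makes $\psi$ a \ceq, hence \bor; the closeness of $\psi\circ g$ and $f\circ\psi$ would then follow formally from Lemma~\ref{co0}(d),(e). I favor the direct computation, which avoids this auxiliary step.
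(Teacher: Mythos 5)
Your proof is correct, but it takes a genuinely different route from the paper's. The paper works entirely inside the closeness calculus of Lemma~\ref{co0}: it observes that $f$ is \clo\ to $\psi\circ\phi\circ f$, hence to $\psi\circ g\circ\phi$, which is \bor\ as a composition of \bor\ maps; and then that $f\circ\psi$ is \clo\ to $\psi\circ g\circ\phi\circ\psi$, which is \clo\ to $\psi\circ g$. Both steps require postcomposing closeness relations by $\psi$ via Lemma~\ref{co0}(e), and therefore rest on the fact that \emph{any} \cin\ of a \ceq\ is itself a \ceq\ (hence \bor) --- exactly the auxiliary fact you sketch in your closing alternative, obtained from Lemma~\ref{co0}(b) and (c) via the observation that any two \cin s of $\phi$ are \clo. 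Your primary argument instead sidesteps the controllability of $\psi$ altogether by pulling every estimate in $X$ back from an estimate between $\phi$-images in $Y$ through the \cem\ inequality $d(x,x')\le L(d(\phi(x),\phi(x')))$; the resulting bounds $L\bigl(2K+L(L(d(x,x')))\bigr)$ and $L(2K+L(K))$ are correct. What each approach buys: the paper's is shorter and reuses the general machinery (and the fact about coarse inverses is needed elsewhere anyway, e.g.\ to know $\psi$ is a \ceq\ in Definition~\ref{co2}), while yours is self-contained, makes the quantitative dependence on $K$ and $L$ explicit, and demonstrates directly that the ``for any \cin\ $\psi$'' clause costs nothing. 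Either write-up would be acceptable; if you keep the direct computation, it would still be worth recording somewhere that $\psi$, being \clo\ to a \ceq\ \cin\ of $\phi$, is itself a \ceq, since that is what Definition~\ref{co2} implicitly presumes of the conjugating pair.
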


\begin{proof}
Let $\psi:Y\to X$ be a \cin\ of $\phi$. In the proof we will be using
all the time Lemma~\ref{co0} (and once Remark~\ref{co1}). The map
$\psi\circ g\circ\phi$ is \bor. Since $\psi\circ\phi$ is \clo\ to the
identity and $\phi\circ f$ is \clo\ to $g\circ\phi$, we see that $f$
is \clo\ to $\psi\circ\phi\circ f$, which is \clo\ to $\psi\circ
g\circ\phi$. Thus, $f$ is \bor.

Further, we see that $f\circ\psi$ is \clo\ to $\psi\circ
g\circ\phi\circ\psi$, which is \clo\ to $\psi\circ g$. Thus,
$f\circ\psi$ is \clo\ to $\psi\circ g$, so $f$ and $g$ are coarsely
conjugate via $\phi$ and $\psi$.
\end{proof}

The assumption that $g$ is \bor\ is important. The following example
shows that it cannot even be replaced by the assumption that $f$ is
\bor.

\begin{example}\label{co8}
Let $X=\R$, $Y=\{(x,y)\in\R^2: y\in\{0,1\}\}$, $f:X\to X$ be the
identity, and $g(x,0)=(x,0)$, $g(x,1)=(x^2,1)$. If $\phi:X\to Y$ is
given by $\phi(x)=(x,0)$, then it is a \ceq\ and $\phi\circ
f=g\circ\phi$. However, there is no \ceq\ $\psi:Y\to X$ such that
$\psi\circ g$ is \clo\ to $f\circ\psi$. Indeed, if such $\psi$ exists,
then by restricting it to $\R\times\{1\}$ and identifying this line
with $\R$, we see that we can use Proposition~\ref{co7} to deduce that
the map $x\mapsto x^2$ is \bor. Since it is clearly not \bor, such
$\psi$ cannot exist.
\ee

A coarse conjugacy between maps need not work for their iterates 
if the maps are not controlled.

\begin{example}\label{co9}
Take $X=Y=[2,\infty)$, $f(x)=x^2$, $g(x)=x^2+\frac1x$, and both $\phi$
and $\psi$ equal to the identity. clearly, the pair $(\phi,\psi)$ is
a coarse conjugacy between $f$ and $g$. However, it is not a coarse
conjugacy between $f^2$ and $g^2$. Indeed,
\[
g^2(x)-f^2(x)=-2x+\frac1{x^2}-\frac{x}{x^3-1}
\]
is not bounded.

However, it is easy to check that $f^2$ and $g^2$ are coarsely
conjugate via $\phi'(x)=x-\frac1{2x^2}$ and $\psi'(x)=x+\frac1{2x^2}$.
\ee

\begin{conjecture}\label{co10}
If $f$ and $g$ are coarsely conjugate then so are $f^n$ and $g^n$ for
all natural $n$.
\end{conjecture}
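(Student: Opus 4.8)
The plan is to produce, for each $n$, a \ceq\ realizing the requirements of Definition~\ref{co2} for $f^n$ and $g^n$. The natural first attempt keeps the original pair $\phi,\psi$ and asks that $\phi\circ f^n$ be \clo\ to $g^n\circ\phi$ and that $\psi\circ g^n$ be \clo\ to $f^n\circ\psi$. Example~\ref{co9} warns that this attempt can fail, so the real content is to understand when $\phi,\psi$ survive and what to do when they do not.

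First I would dispose of the case where the maps are \bor. By Proposition~\ref{co7}, if $g$ is \bor\ then so is $f$, and applying the same proposition to the reversed coarse conjugacy (via $\psi,\phi$, which is a coarse conjugacy of $g$ with $f$ by Lemma~\ref{co5}) shows that $f$ \bor\ implies $g$ \bor; hence, under a coarse conjugacy, $f$ is \bor\ if and only if $g$ is. When both are \bor, the original pair still serves: writing $f^{n+1}=f\circ f^n$, $g^{n+1}=g\circ g^n$ and inserting the intermediate point $g(\phi(f^n(x)))$, the base closeness bounds the first resulting distance by $K$ and the control function $L$ of $g$ bounds the second by $L(K_n)$, giving
\[
d\bigl(\phi(f^{n+1}(x)),\,g^{n+1}(\phi(x))\bigr)\le K+L(K_n),
\]
so that $K_{n+1}=K+L(K_n)$ is a valid closeness constant at stage $n+1$; since $K_1=K$, this recursion keeps every $K_n$ finite and $\phi\circ f^n$ is \clo\ to $g^n\circ\phi$ for all $n$. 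The symmetric estimate for $\psi\circ g^n$ and $f^n\circ\psi$ uses that $f$ is \bor. As $\phi,\psi$ are unchanged they remain a \ceq\ and \cin, so $f^n$ and $g^n$ are coarsely conjugate and the \bor\ case is complete.

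The main obstacle is the case where neither $f$ nor $g$ is \bor. Here $\phi,\psi$ need not work: telescoping $d(\phi(f^n(x)),g^n(\phi(x)))$ along the intermediate points $g^{k}(\phi(f^{n-k}(x)))$, $0\le k\le n$, produces differences of the form $d(g^k(a),g^k(b))$ with $d(a,b)\le K$, and without a control function on $g$ these can grow without bound, exactly the mechanism behind Example~\ref{co9}. The conjugacy must therefore be replaced. Guided by that example, where the identity was perturbed to $x\mapsto x\mp\frac1{2x^2}$ to absorb the accumulated defect, I would seek a map $\phi_n$ at bounded distance from $\phi$—automatically a \ceq\ by Lemma~\ref{co0}(c)—so chosen that $\phi_n\circ f^n$ is \clo\ to $g^n\circ\phi_n$, together with a \cin\ of $\phi_n$ conjugating $g^n$ back to $f^n$. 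In a setting with additive structure (the Euclidean examples, and the linear maps of Section~4) one can try to solve the resulting correction equation explicitly and check that the correction stays bounded. Making such a construction go through in an arbitrary metric space, where there is no addition to exploit and the maps may expand distances arbitrarily, is the crux, and is presumably why the statement is recorded as a conjecture rather than a theorem.
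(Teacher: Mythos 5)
This statement is recorded in the paper as a \emph{conjecture}, and the paper offers no proof of it; the only case the authors settle is the one where $g$ is \bor\ (Corollary~\ref{co12}, via Proposition~\ref{co7} and Lemma~\ref{co11}). Your proposal has the same shape: the portion you actually prove is the \bor\ case, and it is correct. The observation that under a coarse conjugacy $f$ is \bor\ if and only if $g$ is follows from applying Proposition~\ref{co7} in both directions, and your recursion $K_{n+1}=K+L(K_n)$, obtained by inserting the intermediate point $g(\phi(f^n(x)))$ and using the control function of $g$, is exactly the estimate underlying Lemma~\ref{co11}. So for \bor\ maps your argument reproduces, in slightly more explicit quantitative form, what the paper already states as Corollary~\ref{co12}.

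However, this does not prove the statement as posed. The conjecture asserts the conclusion for \emph{all} coarsely conjugate $f$ and $g$, with no controlledness hypothesis, and your treatment of that general case is a strategy sketch rather than an argument: you propose replacing $\phi$ by a bounded perturbation $\phi_n$ chosen to absorb the accumulated defect, but the construction is only indicated in settings with additive structure, and you yourself concede that making it work in an arbitrary metric space is ``the crux.'' That crux is precisely the open content of the conjecture --- Example~\ref{co9} shows the original pair $(\phi,\psi)$ can genuinely fail for $f^2,g^2$, and no general mechanism for producing the corrected pair is given either by you or by the paper. So the proposal contains a genuine gap: it establishes only the special case already covered by Corollary~\ref{co12} and leaves the actual conjecture unresolved, which is consistent with its status in the paper but means the statement remains unproven.
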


\begin{lemma}\label{co11}
Consider maps $f,f':X\to X$, $g,g':Y\to Y$ and a \ceq\ $\phi:X\to Y$
such that $g,g'$ are \bor, $\phi\circ f$ is \clo\ to $g\circ\phi$ and
$\phi\circ f'$ is \clo\ to $g'\circ\phi$. Then $\phi\circ f'\circ f$
is \clo\ to $g'\circ g\circ\phi$.
\end{lemma}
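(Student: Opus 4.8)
The plan is to establish the closeness by transitivity, factoring the passage from $\phi\circ f'\circ f$ to $g'\circ g\circ\phi$ through the single intermediate map $g'\circ\phi\circ f$. Since closeness is an equivalence relation, it suffices to show that $\phi\circ f'\circ f$ is \clo\ to $g'\circ\phi\circ f$ and that $g'\circ\phi\circ f$ is \clo\ to $g'\circ g\circ\phi$. The two halves correspond exactly to replacing the outer factor $f'$ and the inner factor $f$ one at a time.

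First I would treat the outer factor. The hypothesis gives that $\phi\circ f'$ is \clo\ to $g'\circ\phi$, and I want to precompose both with $f$. By Lemma~\ref{co0}~(d), applied with $\zeta=\phi\circ f'$, $\zeta'=g'\circ\phi$ and $\xi=f$, the compositions $\phi\circ f'\circ f$ and $g'\circ\phi\circ f$ are \clo. No hypothesis on $f$ is needed here, because the defining pointwise bound $d((\phi\circ f')(w),(g'\circ\phi)(w))\le K$ survives restriction to the points $w=f(x)$.

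Next I would treat the inner factor. The hypothesis gives that $\phi\circ f$ is \clo\ to $g\circ\phi$, and now I must postcompose both with $g'$. This is the one place where a control assumption is genuinely required: by Lemma~\ref{co0}~(e), applied with $\zeta=\phi\circ f$, $\zeta'=g\circ\phi$ and $\xi=g'$, the compositions $g'\circ\phi\circ f$ and $g'\circ g\circ\phi$ are \clo, precisely because $g'$ is \bor\ and so controls the distance between the $g'$-images of the nearby but distinct points $(\phi\circ f)(x)$ and $(g\circ\phi)(x)$. Chaining the two relations by transitivity yields that $\phi\circ f'\circ f$ is \clo\ to $g'\circ g\circ\phi$, as desired. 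There is no real obstacle; the only point to keep straight is the asymmetry in Lemma~\ref{co0}, where precomposition (part (d)) preserves closeness unconditionally while postcomposition (part (e)) demands that the outer map be \bor. I note in passing that this argument uses neither the hypothesis that $g$ is \bor\ nor the full strength of $\phi$ being a \ceq; only the two closeness hypotheses and the controlledness of $g'$ actually enter.
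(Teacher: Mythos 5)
Your proof is correct, but it takes a genuinely different and noticeably more economical route than the paper's. The paper introduces a coarse inverse $\psi$ of $\phi$, invokes Proposition~\ref{co7} to conclude that $f$ and $f'$ are \bor, shows that $f$ is \clo\ to $\psi\circ g\circ\phi$, and then chains through the intermediate map $\phi\circ f'\circ\psi\circ g\circ\phi$ before collapsing $\phi\circ\psi$ back to the identity. You instead factor through the single intermediate map $g'\circ\phi\circ f$, using only Lemma~\ref{co0}~(d) (unconditional precomposition with $f$) for the first half and Lemma~\ref{co0}~(e) (postcomposition with the \bor\ map $g'$) for the second, then conclude by transitivity of closeness. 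Both arguments are valid, and your applications of (d) and (e) are instantiated correctly, with the asymmetry between the two parts handled exactly where it matters. What your version buys is a visibly weaker set of hypotheses: as you observe, you never use that $g$ is \bor, that $\phi$ is a \ceq\ (or even \bor), or that $f,f'$ are \bor; only the two closeness assumptions and the controlledness of the outer map $g'$ enter. The paper's heavier route has the incidental benefit of recording that $f$ and $f'$ are themselves \bor\ and of staying within the toolkit already set up for Proposition~\ref{co7}, but for the statement as given your argument is the leaner one.
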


\begin{proof}
By Proposition~\ref{co7}, $f$ and $f'$ are \bor. If $\psi$ is a
\cin\ of $\phi$, then, as in the proof of Proposition~\ref{co7}, $f$
is \clo\ to $\psi\circ g\circ\phi$. Now, by Lemma~\ref{co0},
$\phi\circ f'\circ f$ is \clo\ to $\phi\circ f'\circ\psi\circ
g\circ\phi$, which is \clo\ to $g'\circ\phi\circ\psi\circ g\circ\phi$,
which is \clo\ to $g'\circ g\circ\phi$.
\end{proof}

{}From Proposition~\ref{co7} and Lemma~\ref{co11} we get immediately
the following corollary.

\begin{corollary}\label{co12}
If $f$ and $g$ are coarsely conjugate and $g$ is \bor, then for any
natural $n$ the maps $f^n$ and $g^n$ are coarsely conjugate via the
same \ceq s as $f$ and $g$.
\end{corollary}

\section{Coarse entropy}

Let $(X,d)$ be a metric space, and $f:X\to X$ a map. We want to define
coarse entropy of $f$ using similar ideas as in the usual definitions
of topological entropy. Mimicking the original definition of Adler,
Konheim and McAndrew~\cite{AKM} can be difficult, since the space is
not compact and the map is not necessarily continuous. Thus, we will
try to mimic the definition of Bowen~\cite{B}. However, we have to
incorporate the idea of closeness replacing equality. This means
that instead of orbits we should use $\delta$-pseudoorbits.
Fortunately, we know that $\delta$-pseudoorbits work well with 
Bowen's definition (see~\cite{M}). Of course, we have to replace
$\eps$ going to 0 by $R$ going to infinity.

Thus, we define the \emph{coarse entropy} of $f$ as
\[
\hinf(f)=\lim_{\delta\to\infty} \lim_{R\to\infty}
\limsup_{n\to\infty} \frac1n \log s(f,n,R,\delta,x_0),
\]
where $s(f,n,R,\delta,x_0)$ is the supremum of cardinalities of
$R$-separated sets of $\delta$-pseudo\-orbits of $f$ of length $n$
starting at $x_0$. As usual, a $\delta$-pseudoorbit of $f$ of length $n$
starting at $x_0$ is a sequence $(x_0,x_1,\dots,x_n)$ such that
$d(f(x_i),x_{i+1})\le\delta$ for $i=0,1,\dots,n-1$. The distance
between the pseudoorbits $(x_0,x_1,\dots,x_n)$ and
$(y_0,y_1,\dots,y_n)$ is the maximum of the distances $d(x_i,y_i)$ over
$i=0,1,\dots,n$. A set is $R$-separated if the distance between each
two distinct elements of this set is at least $R$.

The value of $\hinf(f)$ in the above definition does not depend on the
choice of $x_0\in X$. Indeed, if $y_0\in X$ is another point, then a
$\delta$-pseudoorbit starting at $x_0$ can have $y_0$ as the next
element (and vice versa).

Given $f:X\to X$ and $g:Y\to Y$, we will say that the map $f$ is
\emph{coarsely embedded} in the map $g$ if there exists a \cem\ of spaces
$\phi:X\to Y$ such that $\phi\circ f$ is \clo\ to $g\circ\phi$.

\begin{theorem}\label{t1}
If $f$ is coarsely embedded in $g$ then $\hinf(f)\le\hinf(g)$.
\end{theorem}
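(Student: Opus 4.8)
The plan is to push $\delta$-pseudoorbits of $f$ forward through $\phi$, verify that they become pseudoorbits of $g$, and check that separated sets stay separated with only a controlled loss in the parameters $R$ and $\delta$. Fix a base point $x_0\in X$ and work with $y_0=\phi(x_0)$ as base point for $g$; this is legitimate because $\hinf$ does not depend on the base point. Let $L$ be a strictly increasing continuous function with $\lim_{t\to\infty}L(t)=\infty$ witnessing that $\phi$ is a \cem, and let $K>0$ be the closeness constant between $\phi\circ f$ and $g\circ\phi$.

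First I would show that $\phi$ sends a $\delta$-pseudoorbit of $f$ to a $\delta'$-pseudoorbit of $g$. If $(x_0,\dots,x_n)$ satisfies $d(f(x_i),x_{i+1})\le\delta$, then
\[
d(g(\phi(x_i)),\phi(x_{i+1}))\le d(g(\phi(x_i)),\phi(f(x_i)))+d(\phi(f(x_i)),\phi(x_{i+1}))\le K+L(\delta),
\]
where the first term is bounded using closeness and the second using the \bor\ bound for $\phi$ together with $d(f(x_i),x_{i+1})\le\delta$. Hence $(\phi(x_0),\dots,\phi(x_n))$ is a $\delta'$-pseudoorbit of $g$ starting at $y_0$, with $\delta'=K+L(\delta)$.

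Next I would track separation via the lower \cem\ inequality. If two $f$-pseudoorbits are $R$-separated, some coordinate $j$ has $d(x_j,y_j)\ge R$, so $R\le d(x_j,y_j)\le L(d(\phi(x_j),\phi(y_j)))$, and strict monotonicity of $L$ gives $d(\phi(x_j),\phi(y_j))\ge L^{-1}(R)=:R'$. Thus the images are $R'$-separated; for $R>L(0)$ we have $R'>0$, so distinct pseudoorbits have distinct images, and $\phi$ carries any $R$-separated set of $\delta$-pseudoorbits of $f$ injectively onto an $R'$-separated set of $\delta'$-pseudoorbits of $g$. Taking the supremum over all such sets yields, for $R>L(0)$,
\[
s(f,n,R,\delta,x_0)\le s\bigl(g,n,L^{-1}(R),K+L(\delta),y_0\bigr).
\]

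Finally I would take logarithms, divide by $n$, and pass through the three limits. Since $L^{-1}(R)\to\infty$ as $R\to\infty$ and $K+L(\delta)\to\infty$ as $\delta\to\infty$, the substitutions carry the iterated limit defining $\hinf(f)$ into the one defining $\hinf(g)$, giving $\hinf(f)\le\hinf(g)$. The step requiring the most care is this limit bookkeeping: I must note that $s$ is non-increasing in $R$ and non-decreasing in $\delta$ so that both inner limits exist in the extended reals, and that the rescalings respect the order of the iterated limits. This works because the inner limit is over $R$ for fixed $\delta$, and $\delta'=K+L(\delta)$ depends only on $\delta$ while $R'=L^{-1}(R)$ depends only on $R$, so the two rescalings do not interfere. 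The essential ingredients, rather than genuine obstacles, are that $\phi$ be \bor\ (to keep pseudoorbits pseudoorbits) and satisfy the lower \cem\ inequality (to preserve separation); no surjectivity of $\phi$ is used, which is exactly why a \cem\ suffices.
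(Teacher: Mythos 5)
Your proposal is correct and follows essentially the same route as the paper: push pseudoorbits forward through $\phi$, use closeness plus the upper \bor\ bound to get the new pseudoorbit constant $K+L(\delta)$, use the lower \cem\ inequality to get $L^{-1}(R)$-separation of images, and observe that the reparametrizations $\delta\mapsto K+L(\delta)$ and $R\mapsto L^{-1}(R)$ both tend to infinity so the iterated limits match up. Your added remarks on injectivity and monotonicity in the limit bookkeeping are harmless elaborations of the same argument.
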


\begin{proof}
We will keep the same notation as in the definitions. Suppose that
$(x_0,x_1,\dots,x_m)$ is a $\delta$-pseudoorbit of $f$ in $X$. For
$i=0,1,\dots,m-1$ we have
\[
d\big(g(\phi(x_i)),\phi(x_{i+1})\big)\le d\big(g(\phi(x_i)),
\phi(f(x_i))\big)+ d\big(\phi(f(x_i)),\phi(x_{i+1})\big)\le
K+L(\delta).
\]
Thus, the image under $\phi$ of a $\delta$-pseudoorbit in $X$ is a
$(L(\delta)+K)$-pseudoorbit of $g$ in $Y$. On the other hand, if two
$\delta$-pseudoorbits in $X$ are $R$-separated, then their images in
$Y$ are $L^{-1}(R)$-separated. Therefore,
\[
\limsup_{n\to\infty} \frac1n \log s(g,n,L^{-1}(R),L(\delta)+K,
\phi(x_0))\ge \limsup_{n\to\infty} \frac1n \log s(f,n,R,\delta,x_0).
\]

The quantities $R$ and $L^{-1}(R)$ go to infinity simultaneously.
Similarly, $\delta$ and $L(\delta)+K$ go to infinity simultaneously. In
such a way we obtain $\hinf(f)\le\hinf(g)$.
\end{proof}

\begin{corollary}\label{c1}
If $f$ is coarsely embedded in $g$ and $g$ is coarsely embedded in $f$ then
\break $\hinf(f)=\hinf(g)$. Therefore, the coarse entropy is an
invariant of coarse conjugacy. In particular, if we change the metric
$d$ to a metric that is bi-Lipschitz equivalent, or quasi-isometric,
to $d$, the coarse entropy will not change.
\end{corollary}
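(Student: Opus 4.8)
The plan is to deduce everything from Theorem~\ref{t1} together with the definitions, so the argument is short. For the first assertion I would apply Theorem~\ref{t1} in both directions: since $f$ is coarsely embedded in $g$ we obtain $\hinf(f)\le\hinf(g)$, and since $g$ is coarsely embedded in $f$ we obtain $\hinf(g)\le\hinf(f)$. Together these give $\hinf(f)=\hinf(g)$.

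For the invariance under coarse conjugacy, I would unwind Definition~\ref{co2}. Suppose $f$ and $g$ are coarsely conjugate via a \ceq\ $\phi$ and a \cin\ $\psi$. Every \ceq\ is in particular a \cem, so $\phi$ is a \cem\ with $\phi\circ f$ \clo\ to $g\circ\phi$, which is exactly the statement that $f$ is coarsely embedded in $g$. Symmetrically, $\psi$ is a \cem\ with $\psi\circ g$ \clo\ to $f\circ\psi$, so $g$ is coarsely embedded in $f$. The first assertion then yields $\hinf(f)=\hinf(g)$.

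For the final sentence I would verify that when $d'$ is bi-Lipschitz equivalent (or quasi-isometric) to $d$, the identity map $\mathrm{id}\colon(X,d)\to(X,d')$ is a \ceq\ whose \cin\ is the identity $(X,d')\to(X,d)$. The bi-Lipschitz bounds $c\,d(x,x')\le d'(x,x')\le C\,d(x,x')$, or in the quasi-isometric case the bounds $\lambda^{-1}d(x,x')-\eps\le d'(x,x')\le\lambda d(x,x')+\eps$, furnish affine controlling functions $L$ in both directions, and surjectivity lets us take $M=0$. Since the identity commutes with $f$ exactly as a set map, both closeness conditions of Definition~\ref{co2} hold with $K=0$; hence $f$ viewed on $(X,d)$ is coarsely conjugate to $f$ viewed on $(X,d')$, and by the preceding paragraph its coarse entropy is unchanged.

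Because the statement is a formal consequence of Theorem~\ref{t1}, I do not expect any serious obstacle. The only points needing a moment's care are the (immediate) observation that a \ceq\ is a \cem\ in the precise sense required by the definition of ``coarsely embedded,'' and the check that the quasi-isometry inequalities produce controlling functions of the form the paper uses---namely increasing functions which, as noted after the definitions, may be replaced by strictly increasing continuous functions tending to infinity.
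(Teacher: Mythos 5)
Your proposal is correct and follows exactly the route the paper intends: the corollary is stated without a separate proof precisely because it is the two-sided application of Theorem~\ref{t1}, combined with the observations that a \ceq\ is in particular a \cem\ (so coarse conjugacy yields mutual coarse embeddings) and that the identity map between bi-Lipschitz equivalent or quasi-isometric metrics is a coarse conjugacy. Your extra care about the controlling functions for the quasi-isometry case is sound and matches the paper's convention of replacing $L$ by a larger strictly increasing function.
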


\begin{remark}\label{r1}
Maps $f$ and $g$ may each coarsely embed in the other without being
coarsely conjugate. Let $X$ be the binary tree with edges of unit
length and $Y$ be $X$ with a ray attached at the root, each with the
path metric (see Figure~\ref{fig1}). If $f$ and $g$ are the identity
maps on $X$ and $Y$ respectively, then $g$ coarsely embeds in $f$ as in
the first diagram and $f$ coarsely embeds in $g$ via the inclusion. But
$f$ and $g$ are not coarsely conjugate since $X$ and $Y$ are not
coarsely equivalent, as their boundaries (a Cantor set, and the union of 
a Cantor set and an isolated point, respectively) are not homeomorphic.
\end{remark}

\begin{figure}
\begin{center}
\includegraphics[width=120truemm]{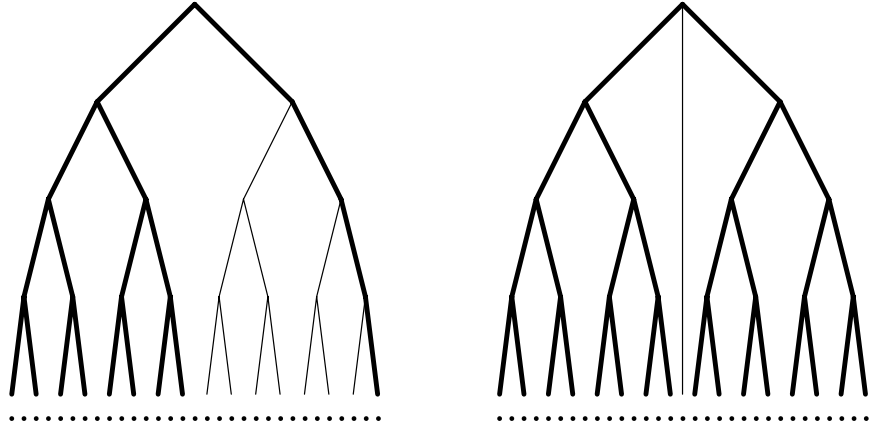}
\caption{Mutual coarse embedding without coarse conjugacy.}\label{fig1}
\end{center}
\end{figure}

\begin{example}\label{e1}
This is an example where $f$ and $g$ are homeomorphisms, they are
conjugate via a Lipschitz (but not bi-Lipschitz) homeomorphism $\phi$
(that is, $\phi\circ f=g\circ\phi$), but $\hinf(g)>\hinf(f)$.

Let $X=Y$ be the half-plane $\{(x,y)\in\R^2:y\ge 0\}$. Let $f:X\to X$
be given by the formula $f(x,y)=(2x,y)$. The identity coarsely embeds $f$
into the linear map of $\R^2$ to itself given by the same formula,
and, as we will see later, the coarse entropy of that map is $\log 2$.
Therefore, by Theorem~\ref{t1}, we have $\hinf(f)\le\log 2$.

The map $\phi:X\to Y$ maps each horizontal line $H_t=\{(x,y)\in\R^2:
y=t\}$ to itself by squeezing linearly the segment (in the variable
$x$) $[-e^t,e^t]$ to the segment $[-1,1]$ and translating the
remaining two half-lines. Thus, if $-e^y\le x\le e^y$, then $\phi(x,y)
=(xe^{-y},y)$; if $x>e^y$ then $\phi(x,y)=(x-e^y+1,y)$; and if $x<-e^y$
then $\phi(x,y)=(x+e^y-1,y)$. Clearly, $\phi$ is a homeomorphism.

Finally, we set $g=\phi\circ f\circ\phi^{-1}$. Let us estimate the
coarse entropy of $g$. Take $x_0=(0,0)$. If $\delta>1$ then for every
$x\in[-1,1]$ there is a $\delta$-pseudoorbit
\[
((0,0),(0,\delta),(0,2\delta),\dots,(0,(n-2)\delta),(x,(n-2)\delta))
\]
of length $n-1$. Therefore, there is a $\delta$-pseudoorbit of length
$n$ starting at $x_0$ and ending at $z=g(x,(n-2)\delta)$. The point
$z$ can be any chosen point of the image under $g$ of
$[-1,1]\times\{(n-2)\delta\}$. To find this image, we use the
definition of $g$. Its second coordinate is $t:=(n-2)\delta$. For the
first coordinate, we take the interval $[-e^t,e^t]$, multiply by 2 to
get $[-2e^t,2e^t]$, and shorten by $e^t-1$ from both sides, to get
$[-e^t-1,e^t+1]$. It follows that we can find an $R$-separated set of
$\delta$-pseudoorbits of $g$ of length $n$ starting at $x_0$, which
has cardinality $(2e^t+2)/R-1$, so
\[
s(g,n,R,\delta,x_0)\ge \frac2R e^{(n-2)\delta}-1.
\]
Thus,
\[
\limsup_{n\to\infty} \frac1n \log s(g,n,R,\delta,x_0)\ge\delta,
\]
and hence, $\hinf(g)=\infty$.
\ee

A subset of a metric space is \emph{$R$-spanning} if for every element
of the space there is an element of the subset at distance less than
$R$. Let $r(f,n,R,\delta,x_0)$ be the infimum of cardinalities of
$R$-spanning sets of $\delta$-pseudoorbits of $f$ of length $n$
starting at $x_0$.

\begin{theorem}\label{t2}
We have
\[
\hinf(f)=\lim_{\delta\to\infty} \lim_{R\to\infty}
\limsup_{n\to\infty} \frac1n \log r(f,n,R,\delta,x_0).
\]
\end{theorem}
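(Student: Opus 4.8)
The plan is to establish the standard two-sided comparison between spanning and separating sets, exactly as in Bowen's original argument for topological entropy, but with the roles of small $\eps$ and large $R$ reversed. Concretely, I would prove the pair of inequalities
\[
r(f,n,R,\delta,x_0)\le s(f,n,R,\delta,x_0)\le r(f,n,R/2,\delta,x_0)
\]
for every $n$, $R$, $\delta$ and $x_0$, where each quantity counts sets of $\delta$-pseudoorbits of length $n$ starting at $x_0$. These inequalities, once fed through the iterated limits $\limsup_n\tfrac1n\log(\cdot)$, then $R\to\infty$, then $\delta\to\infty$, will squeeze the spanning-set expression between $\hinf(f)$ and itself, yielding the desired equality.

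First I would prove the left inequality $r\le s$. Take a maximal $R$-separated set $S$ of $\delta$-pseudoorbits of length $n$ starting at $x_0$, with $|S|=s(f,n,R,\delta,x_0)$ (if the supremum is infinite the inequality is trivial, so assume a maximizing finite set exists, or argue with sets realizing cardinalities approaching the supremum). By maximality, every $\delta$-pseudoorbit lies within distance less than $R$ of some element of $S$ in the pseudoorbit metric; otherwise we could adjoin it and enlarge $S$, contradicting maximality. Hence $S$ is itself $R$-spanning, so the infimum defining $r$ is at most $|S|$, giving $r(f,n,R,\delta,x_0)\le s(f,n,R,\delta,x_0)$.

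Next I would prove the right inequality $s\le r(f,n,R/2,\delta,x_0)$ by the usual pigeonhole argument. Let $E$ be an $R$-separated set and let $T$ be an $(R/2)$-spanning set, both consisting of $\delta$-pseudoorbits of length $n$ starting at $x_0$. Each element of $E$ is within distance less than $R/2$ of some element of $T$; this defines a map $E\to T$. If two distinct elements of $E$ mapped to the same element of $T$, the triangle inequality in the pseudoorbit (sup) metric would put them within distance less than $R$ of each other, contradicting $R$-separation. Thus the map is injective, so $|E|\le|T|$; taking the supremum over $E$ and the infimum over $T$ gives $s(f,n,R,\delta,x_0)\le r(f,n,R/2,\delta,x_0)$.

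Finally I would pass to the limits. Applying $\limsup_n\tfrac1n\log$ to $r(f,n,R,\delta,x_0)\le s(f,n,R,\delta,x_0)$ and then sending $R\to\infty$ and $\delta\to\infty$ shows that the spanning-set quantity is at most $\hinf(f)$. For the reverse, applying the same operations to $s(f,n,R,\delta,x_0)\le r(f,n,R/2,\delta,x_0)$ shows $\hinf(f)$ is at most the spanning-set quantity, using that $R/2\to\infty$ as $R\to\infty$ so the inner $R$-limits agree. \emph{The main obstacle} is bookkeeping rather than conceptual: one must check that halving $R$ is harmless under the iterated limits (it is, since the $R\to\infty$ limit is unaffected by the factor of two, just as the factor-of-two loss in Bowen's compact argument washes out as $\eps\to0$), and one should confirm that the sup/inf over possibly infinite-cardinality families behaves well under $\log$, which it does because both sides tend to $\infty$ together when either is infinite.
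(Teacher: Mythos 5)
Your proposal is correct and follows essentially the same route as the paper: the left inequality via maximality of an $R$-separated set implying it is $R$-spanning, and the right inequality via the pigeonhole/triangle-inequality comparison (your $s(f,n,R,\delta,x_0)\le r(f,n,R/2,\delta,x_0)$ is the paper's $s(f,n,2R,\delta,x_0)\le r(f,n,R,\delta,x_0)$ after rescaling), with the factor of two washed out in the $R\to\infty$ limit.
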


\begin{proof}
Any maximal $R$-separated set is also $R$-spanning. This proves that
\[
r(f,n,R,\delta,x_0)\le s(f,n,R,\delta,x_0),
\]
so
\[
\hinf(f)\ge\lim_{\delta\to\infty} \lim_{R\to\infty}
\limsup_{n\to\infty} \frac1n \log r(f,n,R,\delta,x_0).
\]

On the other hand, in each ball of radius $R$ centered in an element
of an $R$-spanning set there may be at most one element of a
$2R$-separated set. This proves that
\[
r(f,n,R,\delta,x_0)\ge s(f,n,2R,\delta,x_0),
\]
so
\[
\hinf(f)\le\lim_{\delta\to\infty} \lim_{R\to\infty}
\limsup_{n\to\infty} \frac1n \log r(f,n,R,\delta,x_0).
\]
\end{proof}

\begin{theorem}\label{t3}
For any $k\ge 1$ we have $\hinf(f^k)\le k\hinf(f)$. If additionally
$f$ is \bor, then $\hinf(f^k)=k\hinf(f)$.
\end{theorem}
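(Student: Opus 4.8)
The plan is to compare $R$-separated sets of $\delta$-pseudoorbits of $f^k$ with those of $f$ by interpolating and sampling along orbits, and then to keep track of how the parameters $R$ and $\delta$ are distorted under these operations.

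For $\hinf(f^k)\le k\hinf(f)$, which needs no hypothesis on $f$, I would start from a $\delta$-pseudoorbit $(y_0,\dots,y_n)$ of $f^k$ and interpolate it to a sequence of length $kn$ by inserting the genuine $f$-orbit segments $y_i,f(y_i),\dots,f^{k-1}(y_i)$ between consecutive terms. The only indices at which this sequence fails to be a true $f$-orbit are the multiples of $k$, where $d\big(f^k(y_i),y_{i+1}\big)\le\delta$, so it is a $\delta$-pseudoorbit of $f$. Distinct $f^k$-pseudoorbits give distinct interpolants, and $R$-separation of the $y$'s is recovered at the sampled indices $0,k,\dots,kn$; hence an $R$-separated set maps to an $R$-separated set of equal cardinality, giving $s(f^k,n,R,\delta,x_0)\le s(f,kn,R,\delta,x_0)$. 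Writing $\tfrac1n=k\cdot\tfrac1{kn}$, passing to the $\limsup$ along the subsequence $kn$ (which is at most the full $\limsup$), and then letting $R\to\infty$ and $\delta\to\infty$ yields the inequality.

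For the reverse inequality assuming $f$ is \bor, I would sample a $\delta$-pseudoorbit $(x_0,\dots,x_{kn})$ of $f$ at every $k$-th entry. Using the modulus $L$ of $f$, which I may take to satisfy $L(t)\ge t$, an induction on $e_{j+1}\le L(e_j)+\delta$ with $e_0=0$ shows $d\big(f^k(x_{ik}),x_{(i+1)k}\big)\le\Delta_k(\delta)$, where $\Delta_k(\delta)$ is the $k$-fold iterate of $t\mapsto L(t)+\delta$ at $0$; so the sample is a $\Delta_k(\delta)$-pseudoorbit of $f^k$, and $\Delta_k(\delta)\to\infty$ as $\delta\to\infty$. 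The delicate point is separation: if two length-$kn$ pseudoorbits are $R$-separated, the separating index $j_0=ik+r$ need not be a multiple of $k$, so $R$-separation must be transferred from $x_{j_0}$ back to the sampled point $x_{ik}$. Here controlledness is essential: the same estimate gives $d\big(f^r(x_{ik}),x_{ik+r}\big)\le\Delta_k(\delta)$ for $0\le r\le k$, and $f^r$ has modulus $L^{(r)}\le L^{(k)}$, so
\[
R\le d(x_{ik+r},x_{ik+r}')\le 2\Delta_k(\delta)+L^{(k)}\big(d(x_{ik},x_{ik}')\big),
\]
whence the samples are $R'$-separated with $R'=(L^{(k)})^{-1}\big(R-2\Delta_k(\delta)\big)\to\infty$ as $R\to\infty$. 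This yields $s(f,kn,R,\delta,x_0)\le s(f^k,n,R',\Delta_k(\delta),x_0)$.

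To pass from lengths divisible by $k$ to all lengths, I would extend an arbitrary $\delta$-pseudoorbit of $f$ of length $m$ to length $k\lceil m/k\rceil$ by appending true $f$-iterates; this is injective and preserves $R$-separation, so $\limsup_m\tfrac1m\log s(f,m,R,\delta,x_0)$ coincides with the $\limsup$ along multiples of $k$. Combining this with the previous inequality, dividing by $k$, and letting $R\to\infty$ then $\delta\to\infty$—using that $s$ is monotone in $R$ and in $\delta$, so that the limits defining $\hinf$ are monotone in these parameters, and that $R'$ and $\Delta_k(\delta)$ tend to infinity—gives $k\hinf(f)\le\hinf(f^k)$ and hence equality. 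I expect the transfer of $R$-separation at non-sampled indices to be the main obstacle, since that is precisely where the \bor\ hypothesis cannot be dispensed with.
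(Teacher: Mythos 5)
Your proposal is correct and follows essentially the same route as the paper's proof: interpolation by true orbit segments for the inequality $\hinf(f^k)\le k\hinf(f)$, and, for the reverse under the controlled hypothesis, sampling every $k$-th term, bounding the pseudoorbit error by an iterated expression in $L$ and $\delta$ (the paper's $\eta_k$ plays the role of your $\Delta_k(\delta)$), transferring $R$-separation from an arbitrary index back to the nearest sampled index via $L^k$, and reducing arbitrary lengths to multiples of $k$ by monotonicity. The only difference is the exact closed form of the error constant, which is immaterial since both versions tend to infinity with $\delta$.
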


\begin{proof}
Clearly, we have
\[
s(f,kn,R,\delta,x_0)\ge s(f^k,n,R,\delta,x_0).
\]
Therefore
\[\begin{split}
k\cdot\limsup_{m\to\infty} \frac1m \log s(f,m,R,\delta,x_0)&\ge
k\cdot\limsup_{n\to\infty} \frac1{kn} \log s(f,kn,R,\delta,x_0)\\
&\ge\limsup_{n\to\infty} \frac1n \log s(f^k,n,R,\delta,x_0),
\end{split}\]
and thus, $\hinf(f^k)\le k\hinf(f)$.

Assume now that $f$ is \bor\ with function $L$. If
$(x_0,x_1,\dots,x_k)$ is a $\delta$-pseudoorbit of $f$, then by
induction on $k$ we get $d(f^k(x_0),x_k)\le \eta_k$, where
\[
\eta_k=\delta+L(\delta)+L^2(\delta)+\dots+L^{k-1}(\delta).
\]
Thus, if $(x_0,x_1,\dots,x_{nk})$ is a $\delta$-pseudoorbit of $f$,
then $(x_0,x_k,\dots,x_{nk})$ is an $\eta_k$-pseudo\-orbit of $f^k$.

Moreover, if $(x_0,x_1,\dots,x_i)$ and $(y_0,y_1,\dots,y_i)$ are
$\delta$-pseudoorbits of $f$, then we get
\[
d(x_i,y_i)\le d(f^i(x_0),f^i(y_0))+d(f^i(x_0),x_i)+d(f^i(y_0),y_i)
\le L^i(d(x_0,y_0))+2\eta_i
\]
for every $i>0$. We may assume that $L(t)\ge t$ for every $t$, and
then, if $i\le k$,
\[
d(x_i,y_i)\le  L^k(d(x_0,y_0))+2\eta_k.
\]
Therefore, if $d(x_i,y_i)\ge L^k(0)+2\eta_k$, then
\[
d(x_0,y_0)\ge L^{-k}(d(x_i,y_i)-2\eta_k).
\]
Changing indices, we see that if $(x_0,x_1,\dots,x_{nk})$ and
$(y_0,y_1,\dots,y_{nk})$ are $\delta$-pseudo\-orbits of $f$, then for
$m=1,2,\dots,n-1$ and $i=1,2,\dots,k$, if $d(x_{mk+i},y_{mk+i})\ge
L^k(0)+2\eta_k$, then
\[
d(x_{mk},y_{mk})\ge L^{-k}(d(x_{mk+i},y_{mk+i})-2\eta_k).
\]

If the distance between those two pseudoorbits is at least $R$, and
$R\ge L^k(0)+2\eta_k$, then there are $m$ and $i$ such that
$d(x_{mk+i},y_{mk+i})\ge R$, and hence the distance between the
$\eta_k$-pseudoorbits $(x_0,x_k,\dots,x_{nk})$ and
$(y_0,y_k,\dots,y_{nk})$ of $f^k$ is at least
\[
S_k=L^{-k}(R-2\eta_k).
\]
This proves that
\[
s(f,kn,R,\delta,x_0)\le s(f^k,n,S_k,\eta_k,x_0).
\]

If $j=kn+i$ with $i\le k$, then $s(f,j,R,\delta,x_0)\le
s(f,k(n+1),R,\delta,x_0)$. Therefore
\[\begin{split}
&\limsup_{j\to\infty} \frac1j \log s(f,j,R,\delta,x_0)\le
\limsup_{j\to\infty}\frac1j\log s(f,k\lceil j/k\rceil,R,\delta,x_0)\\
=&\limsup_{n\to\infty} \frac1{kn} \log s(f,kn, R,\delta,x_0)\le
\frac1k\limsup_{n\to\infty} \frac1n\log s(f^k,n,S_k,\eta_k,x_0).
\end{split}\]
With $\delta$ (and therefore also $\eta_k$) fixed, $R$ and $S_k$ go to
infinity simultaneously, so
\[
k\cdot\lim_{R\to\infty}\limsup_{j\to\infty} \frac1j \log
s(f,j,R,\delta,x_0)\le \lim_{S\to\infty}\limsup_{n\to\infty}
\frac1n\log s(f^k,n,S,\eta_k,x_0).
\]
Now, $\delta$ and $\eta_k$ go to infinity simultaneously, and
thus, $\hinf(f^k)\ge k\hinf(f)$.
\end{proof}

\begin{example}\label{e2}
This example shows that in the above theorem, if we do not assume $f$
is \bor, then it can happen that $\hinf(f^k)<k\hinf(f)$.

Let $X$ be a disjoint union of rectangles $P_n$, $n=0,1,2,\dots$.
Rectangle $P_{2m}$ has size $1\times 2^m$ and rectangle $P_{2m+1}$ has
size $2^m\times 1$. Let $c_n$ be the center of the rectangle $P_n$. On
each rectangle the metric is the maximum of horizontal and vertical
distances. If $x\in P_n$ and $y\in P_m$ for $n<m$, then
\[
d(x,y)=d(x,c_n)+d(y,c_m)+(n+1)+(n+2)+\dots+m
\]
(that is, the distance between $P_n$ and $P_{n+1}$ is $n+1$).

The map $f$ maps $P_n$ onto $P_{n+1}$ by a linear map that preserves
the horizontal and vertical directions. Thus, as we apply $f$
repeatedly, the rectangles get alternately stretched horizontally while
contracting vertically, and stretched vertically while contracting
horizontally. However, $f^2$ only stretches each rectangle in one
direction by the factor $2$.

Assume that $\delta>2$. For $m>0$ we construct some special
$\delta$-pseudoorbits of the length $2m+2$. We set $x_0=c_0$ and as
$x_1$ we can take any point of $P_0$. For the next $2m+1$ steps we
just follow the orbit of $x_1$. If we choose locations of $x_1$ at the
vertices of a square grid with vertical and rectangular distances of
size $R/2^m$, then for two distinct points of this set the distance
between the last or the last but one elements of the corresponding
pseudoorbits will be at least $R$. There are $4^m/R$ of those
vertices, so $s(f,2m+2,R,\delta,x_0)\ge 4^m/R$. Therefore,
\[
\limsup_{n\to\infty} \frac1n \log s(f,n,R,\delta,x_0)\ge\log 2
\]
so $\hinf(f)\ge\log 2$.

On the other hand, when we look at $\delta$-pseudoorbits for $f^2$,
then once we get into $P_n$ with $n>\delta$, we have to move in each
step from $P_i$ to $P_{i+2}$. This means that up to a multiplicative
and an additive constant, the maximal cardinality of an
$R$-separated set of $\delta$-pseudoorbits will not be larger
than for multiplication by $2$ on the real line. We will
see later that the coarse entropy of this multiplication is $\log 2$,
and thus $\hinf(f^2)\le\log 2<2\log 2\le2\hinf(f)$.
\ee

\begin{theorem}\label{t4}
Let $f:X\to X$ and $g:Y\to Y$ be maps. Then
\[
\hinf(f\times g)\le\hinf(f)+\hinf(g).
\]
\end{theorem}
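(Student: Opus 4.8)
The plan is to control pseudoorbits of the product map $f\times g$ by their coordinate projections onto pseudoorbits of $f$ and of $g$ separately. The key observation is that the product metric (whichever one we use — and since coarse entropy is a quasi-isometry invariant by Corollary~\ref{c1}, we may as well take $d((x,y),(x',y'))=\max(d(x,x'),d(y,y'))$) makes the projections distance-nonincreasing. First I would check the easy structural facts: if $((x_0,y_0),\dots,(x_n,y_n))$ is a $\delta$-pseudoorbit of $f\times g$, then since $(f\times g)(x_i,y_i)=(f(x_i),g(y_i))$ and the max-metric dominates each coordinate, the first coordinate sequence $(x_0,\dots,x_n)$ is a $\delta$-pseudoorbit of $f$ and the second $(y_0,\dots,y_n)$ is a $\delta$-pseudoorbit of $g$. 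All these pseudoorbits start at fixed basepoints $x_0$ and $y_0$, so the basepoint for the product is $(x_0,y_0)$, and by the basepoint-independence noted after the definition of $\hinf$ this is harmless.

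The main step is a counting argument bounding $s(f\times g,n,R,\delta,(x_0,y_0))$. I would take an $R$-separated set $E$ of $\delta$-pseudoorbits of $f\times g$. The idea is that a pseudoorbit of the product is determined by its two coordinate pseudoorbits, so I want to cover the coordinate projections by spanning sets and then bound $|E|$ by the product of the two spanning numbers. Concretely, fix an $R$-spanning set $A$ of $\delta$-pseudoorbits of $f$ (of size $r(f,n,R,\delta,x_0)$) and an $R$-spanning set $B$ for $g$. For each product pseudoorbit in $E$, its $f$-projection lies within $R$ of some element of $A$ and its $g$-projection within $R$ of some element of $B$; this assigns to each member of $E$ a pair in $A\times B$. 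If two members of $E$ were assigned the same pair $(\alpha,\beta)$, then in each coordinate they would both lie within $R$ of a common pseudoorbit, so their coordinatewise distances would each be less than $2R$, whence their product distance (the max) is less than $2R$. Choosing the separation scale appropriately this contradicts $R$-separation at a comparable scale, so the assignment is injective up to a fixed blow-up of $R$. This yields an inequality of the shape
\[
s(f\times g,n,2R,\delta,(x_0,y_0))\le r(f,n,R,\delta,x_0)\cdot r(f,n,R,\delta,y_0),
\]
reading $g$'s spanning number at its own basepoint.

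**The hard part will be** the order of the limits, not the counting. Taking $\frac1n\log$ of the displayed product inequality splits the right side into a sum, and $\limsup$ of a sum is bounded by the sum of $\limsup$s, so
\[
\limsup_{n\to\infty}\frac1n\log s(f\times g,n,2R,\delta,(x_0,y_0))\le
\limsup_{n\to\infty}\frac1n\log r(f,n,R,\delta,x_0)
+\limsup_{n\to\infty}\frac1n\log r(g,n,R,\delta,y_0).
\]
Now I invoke Theorem~\ref{t2} to pass from $r$ back to the defining $s$-quantity in the limits over $R$ and $\delta$: letting $R\to\infty$ (note $R$ and $2R$ go to infinity together) and then $\delta\to\infty$ on both sides gives exactly $\hinf(f\times g)\le\hinf(f)+\hinf(g)$. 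The one point demanding care is that the two summands on the right involve the \emph{same} $R$ and $\delta$ simultaneously rather than being optimized independently; this is fine because each inner $\limsup$ is monotone nonincreasing in $R$ and in $\delta$ individually, so driving the common $R$ and common $\delta$ to infinity realizes each summand's own iterated limit, and the inequality survives in the limit.
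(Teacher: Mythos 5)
Your proposal is correct and follows essentially the same route as the paper: take the max metric on $X\times Y$, note that coordinate projections of $\delta$-pseudoorbits of $f\times g$ are $\delta$-pseudoorbits of $f$ and $g$, bound the product's counting function by the product $r(f,n,R,\delta,x_0)\cdot r(g,n,R,\delta,y_0)$, and finish with subadditivity of $\limsup$ and Theorem~\ref{t2}. The only (immaterial) difference is that the paper directly exhibits $E_X\times E_Y$ as an $R$-spanning set for the product, giving $r(f\times g,n,R,\delta,(x_0,y_0))\le r(f,n,R,\delta,x_0)\,r(g,n,R,\delta,y_0)$, whereas you compare a $2R$-separated set of product pseudoorbits against $A\times B$ by pigeonhole; both handle the limits in $R$ and $\delta$ identically.
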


\begin{proof}
In $X\times Y$ we can take the max metric. If $(x_0,x_1,\dots,x_n)$ is
a $\delta$-pseudoorbit in $X$, and $(y_0,y_1,\dots,y_n)$ is a
$\delta$-pseudoorbit in $Y$, then $((x_0,y_0),(x_1,y_1),\dots,
(x_n,y_n))$ is a $\delta$-pseudoorbit in $X\times Y$. Therefore, if
$E_X$ is an $R$-spanning set of $\delta$-pseudoorbits of $f$ of length
$n$ starting at $x_0$ and $E_Y$ is an $R$-spanning set of
$\delta$-pseudoorbits of $g$ of length $n$ starting at $y_0$, then
$E_X\times E_Y$ (understood in an obvious sense) is an $R$-spanning
set of $\delta$-pseudoorbits of $f\times g$ of length $n$ starting at
$(x_0,y_0)$. Hence,
\begin{equation}\label{eq1}
r(f\times g,n,R,\delta,(x_0,y_0))\le r(f,n,R,\delta,x_0)\cdot
r(g,n,R,\delta,y_0).
\end{equation}
Therefore,
\[\begin{split}
&\limsup_{n\to\infty}\frac1n\log r(f\times g,n,R,\delta,(x_0,y_0))\\
\le&\limsup_{n\to\infty}\left(\frac1n\log r(f,n,R,\delta,x_0)+
\frac1n\log r(g,n,R,\delta,y_0)\right)\\
\le&\limsup_{n\to\infty}\frac1n\log r(f,n,R,\delta,x_0)+
\limsup_{n\to\infty}\frac1n\log r(g,n,R,\delta,y_0).
\end{split}\]
By Theorem~\ref{t2}, we get $\hinf(f\times g)\le\hinf(f)+\hinf(g)$.
\end{proof}

\begin{example}\label{e3}
This example shows that even if we assume that if $f$ and $g$ increase
distances at most 2 times and do not decrease distances, we may not
get equality in Theorem~\ref{t4}.

We define the spaces $X$ and $Y$ in a similar way as in
Example~\ref{e2}, except that instead of rectangles, we take segments
of the real line. The point $c_n$ will be the left endpoint of the
$n$th segment, and the distance in the space is defined in a similar
way as in Example~\ref{e2}. The length of the zeroth segment is 1. The
lengths of the next segments will be determined by the maps $f$ and
$g$. Both of them map the $n$th segment onto the $(n+1)$st one in the
linear way; it will be the multiplication by 1 or 2. If $2^{k^2}\le
n<2^{(k+1)^2}$, then if $k$ is even then $f$ multiplies by 1 and $g$
by 2; if $k$ is odd then $f$ multiplies by 2 and $g$ by 1.

We may assume that $\delta>3$. Then, if $n=2^{k^2}$ with $k$ odd, the
length of the $n$th segment in $X$ is at least
$2^{2^{k^2}-2^{(k-1)^2}}$, so
\[
\frac1n\log s(f,n,R,\delta,x_0)\ge\frac1n\log\frac
{2^{2^{k^2}-2^{(k-1)^2}}}R=\frac{2^{k^2}-2^{(k-1)^2}}{2^{k^2}}\log 2
-\frac1n\log R,
\]
and therefore $\hinf(f)\ge\log 2$. Similarly, $\hinf(g)\ge\log 2$.

To obtain an upper estimate on $\hinf(f\times g)$, in view of
the inequality~\eqref{eq1}, it is enough to
construct for each $n$ $R$-spanning sets of $\delta$-pseudoorbits
starting at $x_0$ for $f$, and starting at $y_0$ for $g$, with
relatively small cardinalities. Let us do it for $f$. Denote the
$n$th segment $P_n$ and its length $l_n$.

Concentrate first on the $\delta$-pseudoorbits $(x_0,x_1,\dots,x_n)$
for which $x_i\in P_i$ (and $x_0=c_0$). We may assume that $\delta$ is
large and $R$ is much larger. Set $m=\lfloor\frac R{6\delta}
\rfloor$ and $n=km$ for some integer $k>0$. Partition each interval
$P_{jm}$ into subsegments of length $R/3$ (one of them may be
shorter). Since $f$ does not shorten the distances between points, if
our $\delta$-pseudoorbit has $jm$th point in a given subsegment of the
partition of $P_{jm}$, then its $(j-1)m$th point is in some specific
segment of length not larger than $R/3+2m\delta$ of $P_{(j-1)m}$.
Since $R/3+2m\delta\le 2R/3$, this segment can intersect at most 3
elements of the partition of $P_{(j-1)m}$. Thus, if we code our
$\delta$-pseudoorbits by the elements of the partitions through which
they pass, the number of the valid codes will be not larger than
$3^k(3l_n/R+1)$. On the other hand, if two $\delta$-pseudoorbits have
the same code, then their distance is at most $2R/3<R$. Thus, there
exists an $R$-spanning set of $\delta$-pseudoorbits of length $n$ of
cardinality at most $3^k(3l_n/R+1)$.

Now we have to deal with the fact that there are $\delta$-pseudoorbits
for which not necessarily $x_i\in P_i$ for each $i$. Once a
$\delta$-pseudoorbit gets to a segment $P_i$ with $i>\delta$, it has
to move to the next segment with each application of $f$. On the other
hand, if $R$ is large enough, if we distinguish between two points
only if their distance is at least $R$, the union of the segments
$P_i$ with $i\le\delta$ is seen as one point. Therefore our estimate
of the cardinality of an $R$-spanning set has to be only multiplied by
$n$.

The other thing we have to deal with is that we obtained our estimate
only for $n$s which are multiples of $m$. However, when taking a limit
with respect to $n$, it does not matter whether we divide by $n$, or
by $n-m$, or by anything in between.

If we use the estimate we obtained for $f$ and the analogous estimate
for $g$ (where the length of the $n$th segment is $q_n$), we get
\[\begin{split}
r(f\times g,n,R,\delta,(x_0,y_0))&\le 3^k(3l_n/R+1)n\cdot3^k(3q_n/R+1)
n\\ &\le 3^{n\cdot 12\delta/R}(3l_n/R+1)(3q_n/R+1)n^2.
\end{split}\]
Taking into account that $l_nq_n=2^n$, we get
\[
\limsup_{n\to\infty} \frac1n \log r(f\times g,n,R,\delta,(x_0,y_0))\le
12\frac\delta{R}\log 3+\log 2.
\]
To compute the coarse entropy, we go to infinity with $R$ before we go
to infinity with $\delta$, so $\hinf(f\times g)\le\log 2$.

Thus, in our example
\[
\hinf(f\times g)\le\log 2<2\log 2=\hinf(f)+\hinf(g).
\]
\ee

The idea of the above example is that since in the definition of
coarse entropy we take the upper limit, for distinct maps those upper
limits can be limits along different subsequences. However, if the
maps are equal, we can take the same subsequences. Therefore we have
the following result (suggested to us by Mariusz Lema\'nczyk).

\begin{proposition}\label{lem}
Let $f:X\to X$ be a map and $k\ge 2$ an integer. Then
\[
\hinf(F)=k\hinf(f),
\]
where $F=f\times f\times\dots\times f$ (k times).
\end{proposition}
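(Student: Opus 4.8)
The plan is to prove the two inequalities separately; only one of them requires an idea. The bound $\hinf(F)\le k\hinf(f)$ is immediate from Theorem~\ref{t4} by induction on $k$: writing $F=f\times(f\times\dots\times f)$ and applying the theorem repeatedly gives $\hinf(F)\le k\hinf(f)$. All the content is therefore in the reverse inequality $\hinf(F)\ge k\hinf(f)$.

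For the lower bound I would work with $R$-separated sets and exploit that every factor carries the \emph{same} map $f$. Equip $X^k$ with the max metric. Then a $\delta$-pseudoorbit of $F$ of length $n$ is exactly a $k$-tuple of $\delta$-pseudoorbits of $f$ of length $n$ (the condition $d(F(\mathbf z_i),\mathbf z_{i+1})\le\delta$ is coordinatewise), and the distance between two product pseudoorbits equals the maximum over the $k$ coordinates of the distances between the corresponding coordinate pseudoorbits. The key construction: if $E$ is an $R$-separated set of $\delta$-pseudoorbits of $f$ of length $n$ starting at $x_0$ with $|E|=s(f,n,R,\delta,x_0)$, then the set $E\times\dots\times E$ of $k$-tuples, viewed as product pseudoorbits of $F$ starting at $\mathbf x_0=(x_0,\dots,x_0)$, is again $R$-separated: two distinct $k$-tuples differ in some coordinate, where their entries are distinct members of $E$ and hence at distance at least $R$, so the max-metric distance is at least $R$. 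This set has cardinality $|E|^k$, so
\[
s(F,n,R,\delta,\mathbf x_0)\ge s(f,n,R,\delta,x_0)^k.
\]

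Taking logarithms, dividing by $n$, and passing to $\limsup_{n\to\infty}$, the scalar $k$ pulls out of the $\limsup$ (because the right-hand side is $k$ times a single sequence), giving
\[
\limsup_{n\to\infty}\frac1n\log s(F,n,R,\delta,\mathbf x_0)\ge
k\limsup_{n\to\infty}\frac1n\log s(f,n,R,\delta,x_0)
\]
for each fixed $R$ and $\delta$. Since the inequality holds for every fixed $R,\delta$, and the remaining limits over $R$ and then $\delta$ are monotone and are exactly the ones defining $\hinf(F)$ on the left and $\hinf(f)$ on the right, letting $R\to\infty$ and then $\delta\to\infty$ yields $\hinf(F)\ge k\hinf(f)$; together with the upper bound this finishes the proof.

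The step to watch --- and the whole reason the statement is not a triviality despite Theorem~\ref{t4} --- is the interaction of $\limsup$ with addition. The product construction applied to two \emph{different} maps $f,g$ only produces $\limsup_n\big(\frac1n\log s(f,\dots)+\frac1n\log s(g,\dots)\big)$, which can be strictly smaller than the sum of the individual upper limits when the two sequences peak along different subsequences; that is precisely the mechanism behind the strict inequality in Example~\ref{e3}. Here all factors contribute the identical sequence, so $k\,\limsup_n a_n=\limsup_n(k\,a_n)$ holds with no loss, and that exact identity is what upgrades the inequality of Theorem~\ref{t4} to an equality.
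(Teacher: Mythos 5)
Your proposal is correct and follows essentially the same route as the paper: the upper bound by inductive application of Theorem~\ref{t4}, and the lower bound by observing that the $k$-fold product $E^k$ of an $R$-separated set of $\delta$-pseudoorbits of $f$ is an $R$-separated set of $\delta$-pseudoorbits of $F$ in the max metric, giving $s(F,n,R,\delta,(x_0,\dots,x_0))\ge s(f,n,R,\delta,x_0)^k$. Your closing remark correctly identifies the identity $\limsup_n(k\,a_n)=k\limsup_n a_n$ as the precise reason equality holds here but can fail for products of distinct maps, which is exactly the point the paper makes before stating the proposition.
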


\begin{proof}
In $X^k$ we take the max metric. If $(x_0^i,x_1^i,\dots,x_n^i)$ are
$\delta$-pseudoorbits of $f$ in $X$ for $i=1,2\dots,k$, then
$((x_0^1,x_0^2, \dots,x_0^k),\dots,(x_n^1,x_n^2,\dots,x_n^k))$ is a
$\delta$-pseudoorbit of $F$ in $X^k$. Therefore, if $E$ is an
$R$-separated set of $\delta$-pseudoorbits of $f$ of length $n$
starting at $x_0$, then $E^k$ is an $R$-separated set of
$\delta$-pseudoorbits of $F$ of length $n$ starting at
$(x_0,x_0,\dots,x_0)$. Hence,
\[
s(F,n,R,\delta,(x_0,x_0,\dots,x_0))\ge(s(f,n,R,\delta,x_0))^k,
\]
and thus $\hinf(F)\ge k\hinf(f)$. Together with Theorem~\ref{t4}
applied inductively, we get $\hinf(F)=k\hinf(f)$.
\end{proof}

\section{Linear maps}

One of the basic tests whether our definition is good is whether the
entropy of a linear map of a finite dimensional euclidean space is
correct, that is, whether it is the sum of positive logarithms of the
absolute values of eigenvalues. We will start with the expanding case.

\begin{lemma}\label{l1}
If $f:\R^q\to\R^q$ is a linear map with all eigenvalues of absolute
value larger than $1$ and the absolute value of the determinant of $f$
is $\Lambda$, then $\hinf(f)=\log\Lambda$.
\end{lemma}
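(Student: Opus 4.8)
The plan is to obtain both inequalities $\hinf(f)\le\log\Lambda$ and $\hinf(f)\ge\log\Lambda$ by reducing everything to a volume count of the set of endpoints of $\delta$-pseudoorbits starting at the fixed point $x_0=0$. Because coarse entropy is a bi-Lipschitz invariant (Corollary~\ref{c1}) and all norms on $\R^q$ are bi-Lipschitz equivalent, the first step is to replace the Euclidean norm by an \emph{adapted} norm $\|\cdot\|_*$. Since every eigenvalue of $f$ has absolute value exceeding $1$, the spectral radius of $f^{-1}$ is less than $1$, so there exist a norm and a constant $\mu>1$ with $\|f^{-1}v\|_*\le\mu^{-1}\|v\|_*$, equivalently $\|fv\|_*\ge\mu\|v\|_*$ for every $v$. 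In this norm $f$ expands every vector, while the volume distortion $\Lambda=|\det f|$ is unchanged. All balls, volumes, and separation below are taken in $\|\cdot\|_*$.

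A $\delta$-pseudoorbit from $0$ has the form $x_i=\sum_{j=0}^{i-1}f^{i-1-j}e_j$ with $\|e_j\|_*\le\delta$, so the set of possible endpoints is the Minkowski sum $\mathcal E_n=\sum_{i=0}^{n-1}f^iB_*(0,\delta)$. For the lower bound I would use only the pseudoorbits that follow a true orbit after the first step, whose endpoints fill the ellipsoid $f^{n-1}B_*(0,\delta)$ of volume $\Lambda^{n-1}\delta^q\vol(B_*(0,1))$. Since endpoints that are $R$-separated come from pseudoorbits that are $R$-separated in the max metric, a standard covering estimate (a maximal $R$-separated subset is $R$-spanning) gives at least $\Lambda^{n-1}(\delta/R)^q$ such endpoints, whence $\limsup_n\frac1n\log s(f,n,R,\delta,0)\ge\log\Lambda$.

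The upper bound rests on a reduction that is exactly where the adapted norm is needed: for $R$ larger than the threshold $\theta=4\delta/(\mu-1)$, two $\delta$-pseudoorbits from $0$ are $R$-separated if and only if their endpoints are $R$-separated. To see this, write $\Delta_i=x_i-y_i$; then $\Delta_{i+1}=f\Delta_i+g_i$ with $\|g_i\|_*\le2\delta$, so $\|\Delta_{i+1}\|_*\ge\mu\|\Delta_i\|_*-2\delta$, and hence once $\|\Delta_i\|_*\ge\theta$ the sequence $\|\Delta_i\|_*$ strictly increases. Thus if the maximum of $\|\Delta_i\|_*$ is at least $R>\theta$, it is attained at $i=n$. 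Consequently $s(f,n,R,\delta,0)$ equals the number of $R$-separated points of $\mathcal E_n$, and it remains to bound the volume of $\mathcal E_n$. Rewriting $\mathcal E_n=f^{n-1}\sum_{l=0}^{n-1}f^{-l}B_*(0,\delta)$ and using $f^{-l}B_*(0,\delta)\subseteq B_*(0,\delta\mu^{-l})$ gives $\mathcal E_n\subseteq f^{n-1}B_*(0,\rho)$ with $\rho=\delta\mu/(\mu-1)$, so $\vol(\mathcal E_n)\le\Lambda^{n-1}\rho^q\vol(B_*(0,1))$. Packing disjoint $\tfrac R2$-balls around an $R$-separated set then yields at most $(1+o(1))\Lambda^{n-1}(2\rho/R)^q$ points, so $\limsup_n\frac1n\log s\le\log\Lambda$. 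Both estimates have exponential rate $\log\Lambda$ independent of $R$ and $\delta$, and the constraint $R>\theta(\delta)$ is automatic once $R\to\infty$, so the outer limits change nothing and $\hinf(f)=\log\Lambda$.

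I expect the endpoint-separation reduction of the previous paragraph to be the main obstacle. In the Euclidean norm it can fail, since $f$ may contract some directions in a single step (its smallest singular value can be below $1$) even though all eigenvalues exceed $1$ in modulus; two pseudoorbits could then diverge in the middle yet reconverge at the endpoint, so endpoint counting would undercount. Passing to the adapted norm — legitimate precisely because of the bi-Lipschitz invariance in Corollary~\ref{c1} — removes this difficulty and simultaneously collapses the Minkowski-sum volume bound to the single ellipsoid $f^{n-1}B_*(0,\rho)$. If one prefers to argue with spanning rather than separated sets, Theorem~\ref{t2} allows replacing $s$ by $r$ throughout, but the separated-set count above is already clean.
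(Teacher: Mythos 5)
Your argument is correct, and its architecture is the same as the paper's: pass to an adapted norm in which $f$ expands every vector (justified by Corollary~\ref{c1}), reduce the count of $\delta$-pseudoorbits from the origin to a count of their endpoints, and compare the volume of the resulting ellipsoid, which grows like $\Lambda^n$, with the volume of an $R$-ball. The lower bounds are essentially identical. The one genuine difference is the key lemma behind the upper-bound reduction. The paper proves a shadowing property: every $\delta$-pseudoorbit stays within $\delta/(\lambda-1)$ of the true backward orbit of its endpoint, so two pseudoorbits whose endpoints are within $S$ are within $2\delta/(\lambda-1)+S$ of each other throughout; it then covers a box containing $K_n$ by $C_3\Lambda^n$ pieces of diameter $<S$ and produces an $R$-spanning set for $R=2\delta/(\lambda-1)+S$. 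You instead prove the contrapositive statement directly: the difference sequence satisfies $\|\Delta_{i+1}\|_*\ge\mu\|\Delta_i\|_*-2\delta$, hence is increasing once it exceeds $4\delta/(\mu-1)$, so for $R$ above that threshold separation is witnessed at the final term, and you then bound $R$-separated endpoints by packing $R/2$-balls into a fattening of $f^{n-1}B_*(0,\rho)$. The two mechanisms deliver the same reduction; yours stays entirely within separated sets and keeps the threshold depending only on $\delta$ and $\mu$ (not on a covering scale $S$), at the mild cost of having to absorb the $R/2$-fattening of the ellipsoid into your $(1+o(1))$ factor, which you do correctly. Both routes give the rate $\log\Lambda$ uniformly in $R$ and $\delta$, so the outer limits are harmless in either version.
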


\begin{proof}
By changing the basis in $\R^q$ we may assume that for the Euclidean
norm $\|\cdot\|$ there exists $\lambda>1$ such that for every
$x\in\R^q$ we have $\|f(x)\|\ge\lambda\|x\|$.

Let $x_0$ be the origin of $\R^q$. Fix $\delta>0$ and consider the set
$\oo_n$ of all $\delta$-pseudoorbits of $f$ of length $n$ starting at
$x_0$. For such a $\delta$-pseudoorbit $(x_0,x_1, \dots,x_n)$ we will
call $x_n$ its \emph{final term}. Let $K_n$ be the set of final terms
of all elements of $\oo_n$. In particular, $K_1=B(\delta)$, where
$B(t)$ denotes the closed ball centered at $x_0$ with radius $t$.
Therefore, $K_n\supset f^{n-1}(B(\delta))$. It follows that if $E$ is
an $R$-spanning set in $\oo_n$, then the set of final terms of $E$ has
to $R$-span $f^{n-1}(B(\delta))$. Thus, if $\vol$ denotes the
$q$-dimensional volume, then the cardinality $|E|$ of $E$
satisfies
\[
|E|\ge\frac{\vol(f^{n-1}(B(\delta)))}{\vol(B(R))}=\Lambda^{n-1}
\frac{\vol(B(\delta))}{\vol(B(R))}=\Lambda^{n-1}\left(\frac\delta
R\right)^q,
\]
so $\hinf(f)\ge\log\Lambda$.

We claim that the $\delta$-pseudoorbits of $f$ have the following
\emph{shadowing property}: if $(x_0,x_1,\dots,x_n)$ is a
$\delta$-pseudoorbit, then the orbit $(f^{-n}(x_n),f^{-n+1}(x_n),
\dots, x_n)$ is $\frac\delta{\lambda-1}$-close to it
(remember that by our assumptions $f$ is a bijection). Indeed, by
induction we get
\[
\|x_k-f^{-n+k}(x_n)\|\le\frac\delta\lambda+\frac\delta{\lambda^2}
+\dots+\frac\delta{\lambda^{n-k}}<\frac\delta{\lambda-1}.
\]

In particular, we get $\|f^{-n}(x_n)\|<\frac\delta{\lambda-1}$, so
$K_n\subset f^n\left(B(\frac\delta{\lambda-1})\right)$. The set
$f^n\left(B(\frac\delta{\lambda-1})\right)$ is a $q$-dimensional
ellipsoid of volume $\Lambda^n\vol\left(B\left(\frac\delta
{\lambda-1}\right)\right)$. This ellipsoid is contained in a
$q$-dimensional box $A_n$ of volume $C_1\Lambda^n$, where the constant
$C_1$ does not depend on $n$. The thickness of $A_n$ (the minimal
length of its edges) is at least $C_2\lambda^n$ for some constant
$C_2>0$ independent of $n$. Therefore for a constant $S>0$ the set
$A_n$ can be covered by $C_3\Lambda^n$ subsets of diameter smaller
than $S$, where $C_3$ does not depend on $n$. Consequently, we can
find sets $E_n$ which are $S$-dense in $K_n$ and $|E_n|\le
C_3\Lambda^n$.

If $(x_0,x_1,\dots,x_n)$ and $(y_0,y_1,\dots,y_n)$ are
$\delta$-pseudoorbits and $\|x_n-y_n\|\le S$, then the distances
between $(x_0,x_1,\dots,x_n)$ and $(f^{-n}(x_n),f^{-n+1}(x_n),\dots,
x_n)$, and between $(y_0,y_1,\dots,y_n)$ and $(f^{-n}(y_n),
f^{-n+1}(y_n),\dots,y_n)$, are smaller than $\frac\delta{\lambda-1}$,
while the distance between $(f^{-n}(x_n),f^{-n+1}(x_n),\dots,x_n)$ and
$(f^{-n}(y_n),f^{-n+1}(y_n),\dots,y_n)$ is not larger than $S$. Thus,
the distance between $(x_0,x_1,\dots,x_n)$ and $(y_0,y_1, \dots,y_n)$
is smaller than $\frac{2\delta}{\lambda-1}+S$. Therefore, if for each
element $x\in E_n$ we choose one $\delta$-pseudoorbit from $\oo_n$
whose final term is $x$, we get an $R$-spanning subset in $\oo_n$ of
cardinality not larger than $C_3\Lambda^n$, where $R=\frac{2\delta}
{\lambda-1}+S$. This gives us the inequality $\hinf(f)\le\log\Lambda$.
\end{proof}

\begin{lemma}\label{l2}
If $f:\R^q\to\R^q$ is a Lipschitz continuous map with Lipschitz
constant $\lambda>1$ then $\hinf(f)\le q\log\lambda$.
\end{lemma}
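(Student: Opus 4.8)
The plan is to bound $\hinf(f)$ from above by combining a crude per-step counting estimate with the multiplicativity of coarse entropy under iteration, the latter serving to wash out the lossy constant that the counting inevitably produces. Since $f$ is Lipschitz it is \bor\ (take $L(t)=\lambda t$), so Theorem~\ref{t3} gives $\hinf(f)=\frac1k\hinf(f^k)$ for every $k\ge1$, and $f^k$ is Lipschitz with constant $\lambda^k$. Thus it suffices to prove the single general estimate that for any Lipschitz $g:\R^q\to\R^q$ with constant $\mu>1$ one has $\hinf(g)\le q\log(2\mu+3)$, and then apply it to $g=f^k$ and let $k\to\infty$, since
\[
\frac1k\,q\log(2\lambda^k+3)=q\log\lambda+\frac qk\log(2+3\lambda^{-k})\longrightarrow q\log\lambda.
\]

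To prove the general estimate I work with spanning sets, which is legitimate by Theorem~\ref{t2}. Fix $\delta$ and the base point $x_0$, and let $K_i$ be the set of all possible $i$-th terms of $\delta$-pseudoorbits of $g$ starting at $x_0$; since $g$ is $\mu$-Lipschitz, $K_i$ is contained in a ball of radius at most $2\delta(\mu^i-1)/(\mu-1)$, hence is bounded. Choose a maximal $R$-separated subset $N_i\subset K_i$, which is automatically $R$-spanning in $K_i$, and code each $\delta$-pseudoorbit $(x_0,\dots,x_n)$ by the sequence $(\nu_0,\dots,\nu_n)$ where $\nu_i\in N_i$ is a closest net point to $x_i$, so that $\|x_i-\nu_i\|<R$. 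Two pseudoorbits with the same code then stay within $2R$ of each other at every time, so choosing one representative pseudoorbit per realized code produces a $2R$-spanning set; hence $r(g,n,2R,\delta,x_0)$ is at most the number of realized codes.

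The number of codes is controlled one step at a time. If $\|x_i-\nu_i\|<R$, then $x_{i+1}\in B(g(x_i),\delta)\subset B(g(\nu_i),\mu R+\delta)$, and consequently $\nu_{i+1}\in B(g(\nu_i),(\mu+1)R+\delta)$. Because $N_{i+1}$ is $R$-separated, a standard packing estimate (disjoint balls of radius $R/2$ about the net points) bounds the number of admissible $\nu_{i+1}$ by $(2\mu+3+2\delta/R)^q$, uniformly in $i$ and $n$. As $\nu_0$ is determined, the number of realized codes is at most $(2\mu+3+2\delta/R)^{qn}$, so
\[
\limsup_{n\to\infty}\frac1n\log r(g,n,2R,\delta,x_0)\le q\log(2\mu+3+2\delta/R).
\]
Letting $R\to\infty$ (the spanning scale $2R$ tends to infinity as well), and then $\delta\to\infty$ — the limit is already independent of $\delta$ — yields $\hinf(g)\le q\log(2\mu+3)$, as needed.

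The one genuinely delicate point is that this per-step count is not tight: it overestimates the branching of pseudoorbits by a dimension-dependent factor, so on its own it only gives a bound of the shape $q\log(C\mu)$ rather than $q\log\mu$. The device that repairs this is exactly the passage to $f^k$: applying the crude bound to the $\lambda^k$-Lipschitz map $f^k$ and dividing by $k$ turns the spurious $q\log C$ into $\frac1k q\log C$, which vanishes as $k\to\infty$. I expect the only real care to be needed in the packing estimate (tracking how the constants depend on $R$ and $\delta$) and in confirming the order of limits in the definition of $\hinf$; note that the non-invertibility of a general Lipschitz $f$ is precisely what blocks the shadowing argument of Lemma~\ref{l1} and forces one to count pseudoorbits step by step rather than by their final terms alone.
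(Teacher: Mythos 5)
Your proof is correct, but it eliminates the lossy packing constant by a different mechanism than the paper does, and the comparison is worth recording. Both arguments code pseudoorbits by which cell of a net or partition they visit and then multiply per-step (or per-block) branching factors; the crux in either case is that a naive one-step count only yields a bound of the form $q\log(C\lambda)$ rather than $q\log\lambda$. The paper kills the constant $C$ internally: it codes the pseudoorbit only at times $m,2m,\dots,km$, obtaining a branching factor $C2^q\lambda^{mq}$ per block of $m$ steps, hence a bound $\frac1m\log(C2^q)+q\log\lambda$; since the block length $m$ is constrained only by $R>2S\lambda^m$, one may let $m\to\infty$ together with $R$, and the error term vanishes inside the $\lim_{R\to\infty}$ of the definition of $\hinf$. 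You instead keep the crude one-step count (getting the clean uniform estimate $\hinf(g)\le q\log(2\mu+3)$ for any $\mu$-Lipschitz $g$) and outsource the averaging to Theorem~\ref{t3}: since a Lipschitz map is \bor, $\hinf(f)=\frac1k\hinf(f^k)\le\frac qk\log(2\lambda^k+3)\to q\log\lambda$. Your route makes the counting lemma simpler and more transparent, at the price of invoking the equality case of Theorem~\ref{t3}, whose proof is itself nontrivial (though it is established before this lemma and independently of it, so there is no circularity); the paper's proof is self-contained modulo Theorem~\ref{t2}. Your counting details check out: the maximal $R$-separated net $N_{i+1}$ restricted to $B(g(\nu_i),(\mu+1)R+\delta)$ has at most $(2\mu+3+2\delta/R)^q$ points by the disjoint-balls volume comparison, two pseudoorbits sharing a code stay within $2R$ of each other, and the order of limits ($R\to\infty$ before $\delta\to\infty$) is respected since your bound is monotone in $R$ and uniform in $\delta$ after the $R$-limit.
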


\begin{proof}
Fix $\delta>0$ and set $S=\frac{2\delta}{\lambda-1}$. Then fix a large
integer $m$ and $R>2S\lambda^m$. If $(x_0,x_1,\dots,x_m)$ and
$(y_0,y_1,\dots,y_m)$ are $\delta$-pseudoorbits then by induction we
see that for $i=0,1,\dots,m$
\[
\|x_i-y_i\|\le \lambda^i\|x_0-y_0\|+2\delta\frac{\lambda^i-1}
{\lambda-1} <\lambda^m(\|x_0-y_0\|+S),
\]
so in particular, if $\|x_0-y_0\|\le S$, then $\|x_i-y_i\|\le
2S\lambda^m$.

There is a constant $C$ such for every $\alpha>\beta>0$ every subset
of $\R^q$ of diameter less than $\alpha$ can be partitioned into less
than $C(\alpha/\beta)^q$ subsets of diameter less than $\beta$. Using
this, we can define by induction for each $j=1,2,\dots,k$ a family
$A_j$ of sets of diameter less than $S$, such that for every
$\delta$-pseudoorbit $(x_0,x_1,\dots,x_{km})$ of $f$, where $x_0$ is
the origin, $x_{jm}$ belongs to exactly one element $B\in A_j$, and
then there are less than $C(2S\lambda^m/S)^q=C2^q\lambda^{mq}$
elements of $A_{j+1}$ to which $x_{(j+1)m}$ can belong. Specifying the
elements of $A_1,A_2,\dots, A_k$ to which the corresponding terms of
our $\delta$-pseudoorbit belong, gives us a set of
$\delta$-pseudoorbits of diameter less than $2S\lambda^m<R$. The
number of such sets is at most $(C2^q\lambda^{mq})^k$. Thus,
\[
\frac1{km}\log r(f,km,R,\delta,x_0)\le\frac1{km}\log
(C2^q\lambda^{mq})^k= \frac1m\log(C2^q\lambda^{mq})=\frac1m\log(C2^q)
+q\log\lambda.
\]
By the same argument as in Example~\ref{e3}, we can replace $km$ with
any $n$ and pass to the limit with $n$, obtaining
\[
\limsup_{n\to\infty}\frac1n\log r(f,n,R,\delta,x_0)\le
\frac1m\log(C2^q) +q\log\lambda.
\]

As we take the limit of the left-hand side of the above inequality as
$R\to\infty$, we can assume that $m\to\infty$, since the only
condition for $m$ is that $R>2S\lambda^m$. After taking the last
limit, as $\delta\to\infty$, we get $\hinf(f)\le q\log\lambda$.
\end{proof}


\begin{theorem}\label{t5}
If $f:\R^q\to\R^q$ is a linear map, then $\hinf(f)=\log\Lambda$, where
$\Lambda$ is the absolute value of the product of all eigenvalues of
$f$ that have absolute value larger than $1$.
\end{theorem}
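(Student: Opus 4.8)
The plan is to reduce the general case to the two situations already treated, by splitting $\R^q$ according to the modulus of the eigenvalues. First I would write $\R^q=E^u\oplus E^{cs}$ as a direct sum of $f$-invariant real subspaces, where $E^u$ is the generalized eigenspace for the eigenvalues of absolute value larger than $1$ and $E^{cs}$ that for the eigenvalues of absolute value at most $1$; this is the usual primary (generalized eigenspace) decomposition, grouping complex conjugate pairs so as to stay over $\R$. Writing $f_u=f|_{E^u}$ and $f_{cs}=f|_{E^{cs}}$, the linear isomorphism $E^u\oplus E^{cs}\to\R^q$ is bi-Lipschitz (any linear isomorphism of finite dimensional normed spaces is) and conjugates $f_u\times f_{cs}$ to $f$; hence by Corollary~\ref{c1} we have $\hinf(f)=\hinf(f_u\times f_{cs})$, and it suffices to compute the latter.

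For the upper bound, Theorem~\ref{t4} gives $\hinf(f_u\times f_{cs})\le\hinf(f_u)+\hinf(f_{cs})$. Since all eigenvalues of $f_u$ have absolute value larger than $1$ and $\Lambda=|\det f_u|$, Lemma~\ref{l1} yields $\hinf(f_u)=\log\Lambda$. For the center-stable factor I would show $\hinf(f_{cs})=0$. The spectral radius of $f_{cs}$ is at most $1$, so for every $\eps>0$ there is a norm on $E^{cs}$ in which the operator norm, equivalently the Lipschitz constant, of $f_{cs}$ is at most $1+\eps$. By Lemma~\ref{l2} this gives $\hinf(f_{cs})\le\dim(E^{cs})\log(1+\eps)$; since $\hinf$ is unchanged under a bi-Lipschitz change of metric (Corollary~\ref{c1}) and all norms on $E^{cs}$ are bi-Lipschitz equivalent, this bound holds for the fixed Euclidean metric for every $\eps>0$, forcing $\hinf(f_{cs})=0$. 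Hence $\hinf(f)\le\log\Lambda$.

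For the matching lower bound I would use that $E^u$ is $f$-invariant: the inclusion $\iota:E^u\hookrightarrow\R^q$ is an isometric, hence coarse, embedding and satisfies $\iota\circ f_u=f\circ\iota$ exactly, so $f_u$ is coarsely embedded in $f$. Theorem~\ref{t1} then gives $\log\Lambda=\hinf(f_u)\le\hinf(f)$, and combined with the upper bound we conclude $\hinf(f)=\log\Lambda$.

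The only real work is the center-stable factor, where eigenvalues of absolute value exactly $1$ with nontrivial Jordan blocks make $\|f_{cs}^n\|$ grow polynomially; the point to get right is that such polynomial spreading contributes no exponential complexity. The adapted-norm estimate feeding into Lemma~\ref{l2}, together with the norm-independence of $\hinf$, is precisely what disposes of this, so I expect this to be the main, though not especially difficult, obstacle.
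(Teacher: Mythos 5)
Your proposal is correct and follows essentially the same route as the paper: split $\R^q$ into the expanding and center-stable invariant subspaces, get the upper bound from Theorem~\ref{t4} together with Lemma~\ref{l1} on the expanding factor and an adapted-norm application of Lemma~\ref{l2} on the center-stable factor, and get the lower bound from Theorem~\ref{t1} via the invariant inclusion of the expanding subspace. Your write-up is in fact slightly more explicit than the paper's (naming the coarse embedding used for the lower bound and invoking Corollary~\ref{c1} for the change of norm), but the argument is the same.
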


\begin{proof}
By changing the metric in $\R^q$, we may consider $\R^q$ as the
product of two Euclidean spaces: $X$ corresponding to the eigenvalues
of $f$ with absolute values larger than 1, and $Y$ corresponding to
the eigenvalues of $f$ with absolute values less than or equal to 1.
In this model, $f=g\times h$, where $g:X\to X$ is a linear map with
all eigenvalues of absolute value larger than $1$ and the absolute
value of the determinant equal to $\Lambda$, and $h:Y\to Y$ is a
linear map with all eigenvalues of absolute value smaller than or
equal to $1$.

By Lemma~\ref{l1}, $\hinf(g)=\log\Lambda$. To find the coarse entropy
of $h$, note that for every $\eps>0$ we can further change the metric
in $Y$ in such a way that $h$ is Lipschitz continuous with Lipschitz
constant $1+\eps$. Then, by Lemma~\ref{l2}, we get $\hinf(h)\le
q\log(1+\eps)$. Since $\eps$ is arbitrary, we get $\hinf(h)=0$.

Now, by Theorem~\ref{t4} we get $\hinf(f)\le\log\Lambda$, and by
Theorem~\ref{t1} we get $\hinf(f)\ge\log\Lambda$. Thus,
$\hinf(f)=\log\Lambda$.
\end{proof}

Let us consider another interesting example, where we can express the
coarse entropy in terms of the properties of the map and the phase
space. Let us recall the notion of the \emph{box-counting dimension}
(or rather ball-counting dimension, but in our case it will be the
same) of a bounded space $X$. It is equal to
\[
\bcd(X)=\lim_{\eps\to0}\frac{\log r(X,\eps)}{-\log\eps}
\]
(if the limit exists), where $r(X,\eps)$ is the minimum
cardinality of any $\eps$-spanning subset of $X$.

\begin{example}\label{e6}
Let $\sq$ be the unit sphere in $\R^q$. Let $A\subset\sq$
be a set having box-counting dimension. Set
\[
X=\{tx\in\R^q:t\ge 0,\ x\in A\}.
\]
Take $\lambda>1$ and define $f:X\to X$ by $f(x)=\lambda x$. We will
show that
\[
\hinf(f)=(\bcd(A)+1)\log\lambda.
\]

Set $\ahat=\{tx\in\R^q:0\le t\le 1,\ x\in A\}$. We will start by
showing that $\bcd(\ahat)=\bcd(A)+1$.

Let $E$ be an $\eps$-spanning set in $A$ and $D$ an $\eps$-spanning
set in $[0,1]$. Then $\{tx:t\in D,\ x\in E\}$ is a $2\eps$-spanning
set in $\ahat$. Therefore
\begin{equation}\label{eq2}
\limsup_{\eps\to0}\frac{\log r(\ahat,2\eps)}{-\log\eps}\le
\bcd(A)+\bcd([0,1])=\bcd(A)+1.
\end{equation}

For $t\in[1/2,1]$, let $F_t$ be the projection to the sphere
$\mathbb{S}_t$ of radius $t$ centered at the origin:
$F_t(y)=t\frac{y}{\|y\|}$. Set $E_t=\{y\in E:t-\eps<\|y\|<t+\eps$. If
$\eps$ is sufficiently small and $t\in[1/2,1]$, then whenever
$\big|\|y\|-\|x\|\big|<\eps$, $\|x\|=t$, and $\|y-x\|<\eps$, then
$\|F_t(y)-x\|<2\eps$. Thus, $|E|\ge r(\mathbb{S}_t\cap A,2\eps)
=r(A,2\eps/t)\ge r(A,4\eps)$. Dividing $[1/2,1]$ into $m$ intervals of
length larger than $2\eps$ and considering as $t$ the centers of those
intervals, we see that $|E|\ge mr(A,4\eps)$. We can
take $m>1/(3\eps)$, so
\[
\liminf_{\eps\to0}\frac{\log r(\ahat,\eps)}{-\log\eps}\ge
\lim_{\eps\to0}\frac{\log r(A,4\eps)-log(3\eps)}{-\log\eps}=\bcd(A)+1.
\]
Together with~\eqref{eq2}, we get $\bcd(\ahat)=\bcd(A)+1$.

Now we have to prove that $\hinf(f)=\bcd(\ahat)$. We will use the same
method as in the proof of Lemma~\ref{l1} and we will use terminology
and some results from this proof.

If $E$ is an $R$-spanning set in $\oo_n$, then the set of final terms
of $E$ has to $R$-span $f^{n-1}(B(\delta)\cap X)=B(\lambda^{n-1}\delta)
\cap X$. However, covering $B(\lambda^{n-1}\delta)\cap X$ with balls
of radius $R$ is the same as covering $\ahat$ with balls of radius
$R/(\lambda^{n-1}\delta)$. Thus,
\[
r(f,n,R,\delta,x_0)\ge r\left(\ahat,\frac{R}{\lambda^{n-1}\delta}
\right).
\]
We have
\[\begin{split}
\lim_{n\to\infty}\frac1n\log r\left(\ahat,\frac{R}{\lambda^{n-1}\delta}
\right)=\lim_{n\to\infty}\frac{\log r\left(\ahat,\frac{R}{\lambda^{n-1}
\delta}\right)}{-\log \frac{R}{\lambda^{n-1}\delta}}&\left(\frac1n
\log\frac\delta{R}+\frac{n-1}n\log\lambda\right)\\
&=\bcd(\ahat)\log\lambda.
\end{split}\]
Therefore, $\hinf(f)\ge\bcd(\ahat)\log\lambda$.

To get the opposite inequality, we use the fact that $K_n\subset
f^n\left(B(\frac\delta{\lambda-1})\cap X\right)=B\left(\frac{\delta
\lambda^n}{\lambda-1}\right)\cap X$. Covering $B\left(\frac{\delta
\lambda^n}{\lambda-1}\right)\cap X$ with balls of radius $S$ is the
same as covering $\ahat$ with balls of radius $S(\lambda-1)/
(\delta\lambda^n)$. Taking $S=R-\frac{2\delta}{\lambda-1}$, we get an
$R$-spanning subset in $\oo_n$ of cardinality not larger than
\[
r\left(\ahat,\frac{S(\lambda-1)}{\delta\lambda^n}\right)=
r\left(\ahat,\frac{R(\lambda-1)-2\delta}{\delta\lambda^n}\right).
\]
Hence,
\[\begin{split}
\limsup_{n\to\infty}\frac1n\log r(f,n,R,\delta,x_0)&\le
\limsup_{n\to\infty}\frac1n\log r\left(\ahat,\frac{R(\lambda-1)
-2\delta}{\delta\lambda^n}\right)\\&=\bcd(\ahat)\lim_{n\to\infty}
\frac1n\log\frac{\delta\lambda^n}{R(\lambda-1)-2\delta}=
\bcd(\ahat)\log\lambda.
\end{split}\]
Therefore, $\hinf(f)\le\bcd(\ahat)\log\lambda$, so $\hinf(f)=
\bcd(\ahat)\log\lambda$.
\ee

\section{Entropy of the identity map}

It seems unavoidable that whatever reasonable definition of the coarse
entropy we try, if the space is large enough, then the entropy of the
identity is positive (or even infinite). Here ``large enough''
basically means that the dimension is infinite.

\begin{example}\label{e4}
Let $X$ be the space $l_\infty$ of bounded real sequences, with the
sup norm, and let $f:X\to X$ be the identity map. Fix $\delta,R>0$.
Let $x_0$ be the zero sequence. If $n\ge R/\delta$ then for every $k$
there exists a $\delta$-pseudoorbit of length $n$ starting at $x_0$
and ending at the sequence whose only non-zero term is the $k$th one,
and it is equal to $R$. The set of those $\delta$-pseudoorbits is an
$R$-separated set of cardinality infinity. This proves that
$\hinf(f)=\infty$.
\ee

The above example and easily constructed similar ones are based on the
property of the space $X$ that for every $R$ there are bounded sets
with $R$-separated infinite subsets. However, there is an example of a
space where the closure of every bounded set is compact, so every
$R$-separated subset of a bounded set is finite, but nevertheless the
identity has infinite coarse entropy.

\begin{example}\label{e5}
Let $X$ be the half-line $[0,\infty)$ with the space $\R^{2^k}$
attached at every integer $k$ (with the origin on our half-line). The
metric in $X$ is ``along the space'', so for example if $x\in\R^k$ and
$y\in\R^l$ with $k\ne l$, then $d(x,y)=\|x\|+|l-k|+\|y\|$. Let $f$ be
the identity on $X$.

Fix $\delta$ and $R$, and let $x_0$ be the point $0$ on our half-line.
If $n$ is large, look at the $\delta$-pseudoorbits from $x_0$ that
first go with step $\delta$ along the half-line, and when they reach
$k=k(n)=\lfloor \delta n-2R\rfloor$, they start spreading out in
$\R^{2^k}$. Their final distance from $x_0$ is $\delta n$ or anything
less, so their final distance from the origin in $\R^{2^k}$ is
approximately $2R$ or anything less. Thus, among the final points on
those pseudoorbits are in particular all points in $\R^{2^k}$ of the
form $(0,\dots,0,R,0,\dots,0)$. They form an $R$-separated set and
there are $2^k$ of them. This means that $s(n,\delta,R,x_0)\ge
2^{k(n)}$. We get
\[
\limsup_{n\to\infty} \frac1n \log s(n,\delta,R,x_0)\ge
\limsup_{n\to\infty} \frac1n (\delta n-2R)\log 2=\delta\log 2.
\]
Therefore, $h_\infty(f)=\infty$.
\ee

\end{document}